\DeclarePairedDelimiter\ceil{\lceil}{\rceil}
\theoremstyle{plain}
\newcommand{\R}{\mathcal{R}}
\newtheorem{theorem}{Theorem}
\newtheorem{proposition}{Proposition}
\newtheorem*{assumption*}{Assumption}
\newtheorem{assumption}{Assumption}
\newtheorem{remark}{Remark}
\newtheorem*{theorem*}{Theorem}
\newtheorem{definition}{Definition}
\pgfplotsset{compat=1.17}
\pgfplotsset{every axis/.append style={
tick label style={font=\footnotesize} }}
 \definecolor{grey1}{rgb}{0.65,0.65,0.65}
\begin{document}
\title{Feedback Nash Equilibria in Differential Games with Impulse Control\footnote{We thank  Herbert Dawid for providing valuable comments and suggestions that helped us  improve the paper.}
}
\author{Utsav Sadana\thanks{ U. Sadana   is with GERAD and
McGill University, Montr\'eal, Canada (e-mail: utsav.sadana@mail.mcgill.ca).}, Puduru Viswanadha Reddy\thanks{P.V. Reddy is with the Indian Institute of Technology Madras, Chennai, India (e-mail:vishwa@ee.iitm.ac.in ).}, and Georges Zaccour 
\thanks{G. Zaccour is with 
GERAD and HEC Montr\'eal, Montr\'eal, Canada (e-mail: georges.zaccour@gerad.ca).}}

\maketitle

 \begin{abstract}
	We study a class of  deterministic  finite-horizon two-player nonzero-sum differential games where players are endowed with  different kinds of controls.   We assume that Player 1 uses piecewise-continuous controls, while Player 2 uses impulse controls. For this class of games, we seek to derive conditions for the existence of  feedback Nash equilibrium strategies for the players. More specifically, we provide a verification theorem for identifying such equilibrium strategies, using the Hamilton-Jacobi-Bellman (HJB) equations for Player 1 and the quasi-variational inequalities (QVIs) for Player 2. Further, we show that the equilibrium number of interventions  by Player 2 is upper bounded. Furthermore, we specialize the obtained results to a scalar two-player linear-quadratic differential game. In this game, Player 1's objective is to drive the state variable towards a specific target value, and Player 2 has a similar objective with a different target value. We provide, for the first time, an analytical characterization of the feedback Nash equilibrium in a linear-quadratic differential game with impulse control. We  illustrate our results using numerical experiments.

\end{abstract}

\textbf{Keywords}:  Nonzero-sum differential games; 
feedback Nash equilibrium; linear-quadratic differential games; impulse controls; quasivariational inequalities
	\section{Introduction}

Many real-world applications, such as industry regulation and cybersecurity, can be modeled as a 
two-player finite-horizon nonzero-sum differential game, where one player
influences the evolution of the state variable continuously over time, whereas the other  takes actions that introduce jumps in the state variable at certain strategically chosen discrete time instants.  An example of such a
setting is a game between an environmental regulation agency, which
determines when and by how much to change the cap on pollution emissions, and a (representative)
firm,  which continuously makes production decisions that have emissions as a
by-product. %

 Nash equilibrium in differential games varies with the information that is available to the players when they determine their strategies, which is also known as the information structure \cite{Basar1999}.  In our previous paper \cite{Sadana2019}, we introduced a two-player nonzero-sum differential game with impulse controls to study the aforementioned interactions assuming an open-loop information structure, where the strategies of the players are functions of time and the initial state (which is a known parameter). It is well known that  open-loop Nash equilibrium (OLNE) strategies are not strongly time consistent, that is, that the equilibrium strategies derived for a given initial state might not constitute the equilibrium of the subgame starting at an intermediate time instant during the game, if the state value at the start of the subgame deviates from the equilibrium state trajectory determined at the start of the game \cite{Basar1989}. To address this limitation of open-loop strategies, the literature on  differential game theory has  focused on a feedback information structure, where players' actions at each instant of time during the game are determined by a strategy that depends on both the current state and the current time \cite{Haurie2012, Engwerda2005, Basar2018}. The resulting feedback strategies of the players are known to be strongly time consistent \cite{Basar1989}.  
 
The objective of this paper is to study the  class of games that we have considered in  \cite{Sadana2019}, but here under a feedback information structure. In  \cite{Sadana2019}, we have studied  a  class of differential games where Player 1 uses piecewise continuous controls and Player 2 uses impulse controls. The novelty of the present paper lies in providing conditions for the existence of a feedback Nash equilibrium (FNE) in this canonical class of differential games. We have studied these canonical games of minimal configuration for analytical tractability, and our model can be extended to the more general case where both players use both types of controls. FNE is obtained under the assumption that the impulse controls lie within the class of threshold policies, that is,  Player 2 gives an impulse only when the state leaves her continuation region, which is  characterized by using the Bensoussan Lions quasivariational inequalities (QVIs) \cite{Bensoussan1982,Bensoussan1984,Aubin1982}. Even for impulse optimal control problems, it is challenging to solve QVIs for a general class of impulse controls (see, e.g., the central bank intervention problems studied in  \cite{Cadenillas1999} and \cite{Bertola2016}). Furthermore, threshold policies are quite natural for applications in industry regulation and cybersecurity.

Our contribution is threefold:  First, we provide a verification theorem for a general class of differential games with impulse controls that can be used to characterize the FNE strategies. In particular, we show that the (value) functions that satisfy the Hamilton-Jacobi-Bellman equations for Player 1, coupled with a system of QVIs for Player 2, coincide with the  respective payoffs of the players in the FNE. The novel feature of our model is that Player 1 can continuously change both the state trajectory and Player 2's continuation set, which is a collection of all time and state vectors for which it is optimal for Player 2  not to  intervene in the system. This feature differentiates our work from the existing literature on differential games with impulse control (see \cite{Aid2020} and \cite{Basei2020}), where the continuous evolution of the state is exogenously given and all players  shift the state from one level to another at discrete time instants. Since the FNE strategies  obtained by using the verification theorem are a function of the current time and state pairs, they are strongly time consistent. 

Second, we show that, under a few regularity assumptions, the equilibrium number of impulses is bounded by a value that is derived from the problem data.

Our third contribution lies in providing, for the first time, a complete analytical characterization of FNE in a scalar linear-quadratic differential game (LQDG) with impulse controls.  LQDGs have been widely studied in
engineering, economics, and management because they provide a tractable
framework to model real-world problems involving nonconstant returns to
scale, interactions between the players' control variables, as well as
interactions between the state and control variables. LQDGs assume linear
state dynamics, which can be seen as a locally reasonable approximation of
nonlinear state dynamics. A comprehensive coverage of LQDGs can be found
in, e.g., \cite{Basar1999}, \cite{Dockner2000}, \cite{Engwerda2005}, \cite{Haurie2012}, and \cite{Basar2018}. However, these references provide
existence and uniqueness results for classical differential games, where
players only use ordinary controls and where there are no fixed costs in the
game. To the best of our knowledge, the literature on differential games
does not provide any theoretical or computational means to identify the FNE in nonzero-sum LQDGs with impulse controls. 

The specialized linear-quadratic game  we study in this paper involves  Player 1 using piecewise-continuous controls to minimize the cost associated with the state deviating from her target value, while Player 2 uses impulse controls to instantaneously change the state from one level to another so as to keep the state close to her own target. This model is a multi-agent adaptation of the impulse optimal control  problem (single player) studied in \cite{Bertola2016}.  In particular, in our setting, Player 2's impulse  optimal control problem is a modified version of the impulse control problem analyzed in \cite{Bertola2016}.  Our regularity assumptions on the value function and impulse controls of Player 2 also follow from \cite{Bertola2016} where analytical solutions of the HJB equation are obtained in the continuation region by using a quadratic form on the value function; see also \cite{Runggaldier2018}.

The remainder of the paper is organized as follows. In Section \ref{FBNEsec:lit_review}, we  review the literature on impulse optimal control problems, differential games where at least one player uses  piecewise-continuous controls, and impulse games where all players use impulse controls only. We introduce our model in Section \ref{FBNEsec:model}. In Section \ref{FBNEsec:FNE}, we provide  a verification theorem for the existence of the FNE. In Section \ref{FBNEsec:LQDG:FB}, we specialize our results  to a scalar linear-quadratic game, and we solve this game in  Section \ref{FBNEsec:numerical} for different problem parameters. Finally, concluding remarks are given in Section \ref{FBNEsec:conclusion}.

\subsection{Literature review}\label{FBNEsec:lit_review}
One of the well-studied impulse control problems   is the central bank intervention problem, where the bank intervenes in the foreign exchange market and continuously controls the domestic interest rate to keep the exchange rate close to a target value  (see, e.g., \cite{Bertola2016} and \cite{Runggaldier2018}).
The characterization of optimal impulse control in a  one-decision-maker
setting has been the topic of a long series of contributions in diverse
fields, e.g, finance \cite{Korn1998};  management  \cite{Reddy2016,Chahim2013,Chahim2017,Erdlenbruch2013,Berovic2004}; and epidemiology  \cite{Taynitskiy2019}. In contrast, the literature in
differential games with impulse controls has been very limited, and has
predominantly dealt with zero-sum games (see, e.g., \cite{Chikrii2007} and \cite{Azimzadeh2019}). With the  exception of our previous papers  \cite{Sadana2019, Sadana2020a, Sadana2020b}, the equilibrium solutions in nonzero-sum differential games with impulse controls have
been obtained under the assumption that the impulse timing is known a priori   \cite{Chang2013}.  

In \cite{Sadana2019}, we provided
an algorithm for computing the open-loop Nash equilibrium in linear-quadratic dynamic games with impulse control. Reference \cite{Sadana2020a} characterized the sampled-data Nash equilibrium for the class of games introduced in \cite{Sadana2019}. Further, \cite{Sadana2020b} determined the FNE for a specialized case of linear-state differential games
(LSDGs) with impulse controls, and showed, contrary to the case with
ordinary controls, that the FNE and OLNE do not coincide when linear value functions are used to determine the FNE. By definition, LSDGs do not account for nonlinearities in the state
variables or interactions between the state and control variables in the
players' objective functionals, which limits their applications in practice.
\ In this paper, we relax this restriction and consider a general class of differential games, and by the same token, push further
the literature in nonzero-sum differential games.

Our work is closely related to the  impulse games studied in \cite{Ferrari2019,Aid2020, Basei2020}, and \cite{Aid2021} with a feedback information structure where, however, all players are assumed to make discrete-time interventions in the continuous-time stochastic processes.  To illustrate, \cite{Ferrari2019} studied a specialized pollution control game between a government that determines the regulatory constraints on  emissions  and a (representative) firm that takes discrete-time  actions to expand its capacity. It is assumed that both the government and the firm use only impulse controls. In \cite{Aid2020}, the authors studied infinite-horizon nonzero-sum game problem assuming threshold-type impulse controls and showed that a system of QVIs gives sufficient conditions for a FNE if the value functions of both players satisfy certain regularity  conditions. There are no piecewise-continuous controls in their model, which limit its applicability to many problems of interest in regulation and security.  
Reference \cite{Basei2020} extended their two-player model to an $N$-player setting and analyzed the corresponding mean-field game. In \cite{Campi2020}, a game problem between an impulse player and a stopper is  solved using the QVIs.
The consideration of impulse controls
makes it difficult to analytically characterize Nash equilibria for a
general class of differential games, which explains why it is tempting to
focus on tractable games. For instance, \cite{Aid2020} determined closed-form solutions for symmetric linear-state
impulse stochastic games.

\section{Model}\label{FBNEsec:model}
We consider a deterministic finite-horizon two-player nonzero-sum differential game where the two players can affect a continuously evolving state vector to minimize their individual costs. In our canonical game, the two players are equipped with different types of controls. In particular, Player 1 continuously affects the state vector using her piecewise continuous control $u(t) \in \Omega_1 \subset \mathbb{R}^{m_{1}}$ while Player 2 uses discrete-time actions to instantaneously change the state by using an impulse control $\tilde{v}=\{(\tau_i,\xi_i)\}_{i\geq 1}$ where $\tau_i$ denotes an intervention instant and  $\xi_i \in \Omega_2 \subset \mathbb{R}^{m_{2}}$ denotes the size of the impulse at time $\tau_i$.  The sets $\Omega_1$ and $\Omega_2$ are assumed to be bounded and convex.

\noindent
The state vector is controlled by Player 1 and evolves as follows: 
\begin{align}
& \dot{x}(t)=f(x(t),u(t)),\;x(0^-)=x_{0},\;\text{for}\;t\neq \tau_i, i\geq 1. \label{FBNEcont_state:model} 
\end{align}
And at the impulse instant, $\tau_i$, Player 2 introduces jumps that are given by 
\begin{align}
    & x(\tau _{i}^{+})-x(\tau _{i}^{-})=g(x(\tau _{i}^{-}),\xi_{i}),  \label{FBNEjump_state:model}
\end{align}
where  $f:\mathbb{R}^{n}\times \mathbb{R}^{m_{1}}\rightarrow \mathbb{R}^{n}$, $g:\mathbb{R}^{n}\times \mathbb{R}^{m_{2}}\rightarrow \mathbb{R}^{n}$, and $\tau_i^-$ and $\tau_i^+$ denote the time instants just before and after the intervention time $\tau_i$.

\noindent
The cost functions of Player $1$ and Player $2$ are given by  
\begin{align}
&J_{1}(0,x_{0},u(.),\tilde{v})=\int_{0}^{T}h_1(x(t),u(t))dt +\sum\limits_{i\geq 1}\mathbbm{1}_{0\leq\tau_i<T}~b_1(x(\tau _{i}^{-}),\xi_{i})+s_{1}(x(T)), \label{FBNEOBJ1:model}\\
&J_{2}(0,x_{0},u(.),\tilde{v})=\int_{0}^{T}h_2(x(t),u(t))dt +\sum\limits_{i\geq 1} \mathbbm{1}_{0\leq\tau_i<T}~b_2(x(\tau _{i}^{-}),\xi_{i})+s_{2}(x(T)), \label{FBNEOBJ2:model}
\end{align}
where $h_{i}:\mathbb{R}^{n}\times \mathbb{R}^{m_{1}}\rightarrow \mathbb{R}$ is the running cost of Player $i$, $b_{i}:\mathbb{R}^{n}\times \mathbb{R}^{m_{2}}\rightarrow \mathbb{R}$ is the cost accrued by Player $i$ at the time of impulse,  and $%
s_{i}:\mathbb{R}^{n}\rightarrow \mathbb{R}$ is the terminal cost of Player $i$. Here, $\mathbbm{1}_{y}$ denotes an indicator function of $y$, that is, $\mathbbm{1}_{y}$ is equal to $1$ if $y$ holds; otherwise, it is equal to $0$.

\section{Feedback Nash equilibrium}\label{FBNEsec:FNE}
We focus our attention on the derivation of Nash equilibrium strategies
under a memoryless perfect state information structure, also referred to as feedback Nash or Markov-perfect  equilibrium. For this information structure, players use strategies that are functions of the current time $t$ and current state vector $x(t)$.
\subsection{Strategy of Player 1 and Player 2}
The strategy spaces of the players  are described as follows: Let $\Sigma:=\big\{(t,x)\, \rvert\, t\in [0,T], x \in \mathbb{R}^n\big\}$ and let $\mathcal{T}$ denote the set of admissible impulse instants.  Player 1 affects the continuously evolving state dynamics $x(t)$ using her piecewise-continuous state-feedback strategy  $\gamma:[0,T] \times \mathbb{R}^n\rightarrow \Omega_1$, while Player 2 exercises discrete-time actions  given by her state-feedback intervention policy $\delta$. Following the literature (see \cite{Bertola2016} and \cite{Aid2020}) on impulse controls, the intervention policy  $\delta$ involves determining a continuation set $\mathcal{C}$ and a continuous function $\zeta$ such that Player 2 gives an impulse if and only if $(t,x) \in \Sigma\backslash \mathcal{C}$, and when Player 2 gives an impulse, its magnitude  is given by the function $\zeta:[0,T]\times \mathbb{R}^n\rightarrow \Omega_2$. The intervention set $\mathcal{I}$ is given by $\mathcal{I}=\Sigma\backslash\mathcal{C}$.   For a given strategy pair $(\gamma,\delta)$, Player $1$'s control is given by $u(t)=\gamma(t,x)$ and Player $2$'s impulse control $\tilde{v}$ is a sequence $\{(\tau_i,\xi_i)\}_{i\geq 1}$ where $\tau_i$ is the impulse instant and $\xi_i$ is the impulse level.

 \begin{remark}
We emphasize that the timing of the interventions   are given in feedback form as the continuation set $\mathcal{C}$ depends on both the current time and the current state vector. In particular, the continuation and intervention sets will be characterized,
in Section \ref{sec:cont_interv_set}, by the QVIs associated with Player 2's optimal behavior. 
 \end{remark}

 \begin{remark}
   Nash equilibria in zero-sum differential games with impulse controls have been obtained in the literature (see, e.g., \cite{Azimzadeh2019} and \cite{Cosso2013}) assuming nonanticipative strategies \cite{Elliott1972} where each player determines her strategy as a function of her opponent's strategy in a way that the strategies do not depend on the future strategies of the opponent.  For tractability, we focus on feedback strategies that are also considered in \cite{Aid2020}. As mentioned in \cite{Aid2020}, the feedback strategies  are  dependent on the other player's strategies via the state vector, which can be affected by both the players.
\end{remark}

 \begin{remark}\label{def:admissible}
  The actions of the players associated with an admissible strategy pair $(\gamma,\delta)$ can be described as follows: Player 1 continuously controls the state trajectory using state feedback $\gamma(t,x)$ during the time that the state lies in the continuation set $\mathcal{C}$. When the state leaves  set $\mathcal{C}$, Player 2 intervenes and gives an impulse of size $\zeta(t,x)$ to bring the state into set $\mathcal{C}$. 
 \end{remark}
 
\begin{definition}\label{FBNEassum_interior:LQDGFB} The sequence $\tilde{v}=\{(\tau
_{i},v_{i})\}_{i\geq 1},$ is an admissible impulse control  of Player $2$ if  the number of impulses is finite and the impulse instants lie in the set $\mathcal {T}$ given by 
	\begin{align*}
&\mathcal T=\{\tau_i,\,i\geq 1,\,\,~\rvert~0\leq  \tau _{1}<\tau _{2}<\cdots< T\},\\
&\tau_n=\inf\{t>\tau_{n-1}:(t,x)\not\in \mathcal{C}\},\, n\geq 1,\, \tau_0:=0.
	\end{align*}
\end{definition}

\noindent
The above definition ensures that Player 2 gives an impulse as soon as the state leaves the continuation set $\mathcal{C}$.\\

\noindent
Next, we determine the cost-to-go functions for Player 1 and Player 2 for a given strategy pair $(\gamma,\delta)$ and for any starting position of the game $(t,x)$. Suppose $\gamma_{[t,T]} \in \Gamma_{[t,T]}$ and $\delta_{[t,T]}\in \Delta_{[t,T]}$ are restrictions of $\gamma$ and $\delta$, respectively, to the interval $[t,T]$, and $\Gamma_{[t,T]}$ and $\Delta_{[t,T]}$ denote the strategy sets for Player 1 and Player 2, respectively, in the interval $[t,T]$.  Then, the state evolution for any starting position of the game $(t,x)$ is given by
\begin{align}
& \dot{x}(t)=f(x(t),\gamma(t,x(t))),\;x(t)=x,\;\text{for}\;(t,x)\in \mathcal{C},  \label{FBNEcont_state:strategies} \\
& x(\tau_i^+)-x(\tau_i^{-})=g(x(\tau_i^{-}),\xi_i),\;\text{for}%
\;(\tau_i,x(\tau_i)) \in \mathcal{I},   \label{FBNEjump_state:strategies}
\end{align}
and the  cost-to-go functions are given by       
\begin{align}
&J_{1}(t,x,\gamma_{[t,T]},\delta_{[t,T]})=\int_{t}^{T}h_1(x(s),\gamma_{[t,T]}(s,x(s)))ds+\sum\limits_{j\geq 1} \mathbbm{1}_{t\leq\tau_j<T}~ b_1(x(\tau _{j}^{-}),\xi_j)+s_{1}(x(T)), \label{FBNEOBJ1:strategies}\\
&J_{2}(t,x,\gamma_{[t,T]},\delta_{[t,T]})=\int_{t}^{T}h_2(x(s),\gamma_{[t,T]}(s,x(s)))ds +\sum\limits_{j\geq 1} \mathbbm{1}_{t\leq\tau_j<T}~b_2(x(\tau _{j}^{-}),\xi_j)+s_{2}(x(T)), \label{FBNEOBJ2:strategies}
\end{align}
The differential game described by  \eqref{FBNEcont_state:strategies}-\eqref{FBNEOBJ2:strategies} constitutes a nonstandard optimal control problem of Player 1 due to intervention costs and state jumps, and an impulse optimal control problem of Player 2.

\noindent
 The feedback Nash equilibrium  is defined as follows:
	\begin{definition}\label{def:FNE}
		For the differential game described by (\ref{FBNEcont_state:strategies}--\ref{FBNEOBJ2:strategies}) with a memoryless perfect state information pattern, the strategy profile $(\gamma^*,\delta^*)\in \Gamma\times \Delta$ constitutes a feedback Nash equilibrium solution if, for any $(t,x)\in \Sigma$, we have 
		\begin{subequations}
		\begin{align}
	&	J_1(t,x,\gamma_{[t,T]}^*,\delta_{[t,T]}^*)\leq J_1(t,x,\gamma_{[t,T]},\delta_{[t,T]}^*), \;\forall \gamma_{[t,T]} \in \Gamma_{[t,T]},\label{FNE:P1} \\
	&		J_2(t,x,\gamma_{[t,T]}^*,\delta_{[t,T]}^*)\leq J_2(t,x,\gamma_{[t,T]}^*,\delta_{[t,T]}), \;\forall \delta_{[t,T]} \in \Delta_{[t,T]}. \label{FNE:P2}	\end{align}
	\end{subequations}
	\end{definition}
\subsection{Verification theorem}\label{FBNEsubsec:verification:theorem}
In this section, we provide methods for identifying the FNE associated with the differential game described
by (\ref{FBNEcont_state:strategies}--\ref{FBNEOBJ2:strategies}). To this end, from \eqref{FNE:P1}, we know that the FNE strategy $\gamma^*$ of Player 1 provides the best response
to Player 2's FNE strategy $\delta^*$. Similarly, from \eqref{FNE:P2}, Player 2's FNE strategy $\delta^*$ is the best response to Player
1's FNE strategy $\gamma^*$. Further,  $V_1:[t,T]\times \mathbb{R}^n \rightarrow \mathbb{R}$ and $V_2:[t,T]\times \mathbb{R}^n \rightarrow \mathbb{R}$ denote the equilibrium cost-to-go of the players in the subgame
starting at $(t,x) \in \Sigma$, and can be defined  as follows using \eqref{FNE:P1} and \eqref{FNE:P2}:
\begin{subequations}
\begin{align}
    &V_1(t,x) =\inf_{\gamma_{[t,T]} \in \Gamma_{[t,T]}} J_1(t,x,\gamma_{[t,T]},\delta^*_{[t,T]}),\\
      &V_2(t,x) =\inf_{\delta_{[t,T]} \in \Delta_{[t,T]}} J_2(t,x,\gamma^*_{[t,T]},\delta_{[t,T]}).
\end{align}
\end{subequations}
The following is a standing assumption on the value functions, which will be used
throughout the paper.
\begin{assumption}\label{assum:Vi}
The value function of Player 1, $V_1(t,x)$, is   differentiable in both $t$ and $x$ when $(t,x) \in \mathcal{C}$.
\end{assumption}

From \eqref{FNE:P1}, the value function $V_1(t,x)$ associated with Player 1's optimal control
problem satisfies the following Hamilton-Jacobi-Bellman (HJB) equation for a given impulse control $\{(\tau_i^*,\xi_i^*)\}_{i\geq 1}$ corresponding to  Player 2's FNE strategy $\delta^*$:
\begin{subequations}\label{HJB_all}	
	\begin{align}
   & -\frac{\partial V_1(t,x) }{\partial t}= \min_{\varphi \in \Omega_{1}} \mathcal{H}_1\left(t,\varphi,\frac{\partial V_1(t,x)}{\partial x}\right), \,(t,x) \in \mathcal{C},\label{FBNEeq:HJB:V1}\\
   & V_1(T,x(T))=s_1(x(T)),\, \forall (T,x) \in \Sigma, \label{FBNEeq:terminal:V1}\\
   &    V_1(\tau_i^{*-},x(\tau_i^{*-}))=V_1(\tau_i^{*-},x(\tau_i^{*-})+g(x(\tau_i^{*-}),\xi_i^*))+ b_1(x(\tau_i^{*-}),\xi_i^*),\, (\tau_i^*,x(\tau_i^*))\in  \mathcal{I},\label{FBNEeq:update:V1}
\end{align}
where
\begin{align}
\mathcal{H}_1\left(t,\varphi,\frac{\partial V_1(t,x)}{\partial x}\right)=    h_{1}(x,\varphi)+ \left(\frac{\partial V_1(t,x)}{\partial x}\right)^Tf(x,\varphi).
\end{align}
\end{subequations}

The above conditions  can be interpreted as follows. From Definition \ref{FBNEassum:lipschitz}, an admissible impulse cannot occur at the terminal time, hence condition \eqref{FBNEeq:terminal:V1} holds. In the continuation region $\mathcal{C}$, Player 2 does not give any impulse, and therefore, the value function of Player 1 satisfies the HJB equation \eqref{FBNEeq:HJB:V1}. When an impulse occurs in the intervention region, that is, $(\tau_i^*,x(\tau_i^*)) \in \mathcal{I}$, then Player 1's cost-to-go is the sum of the additional cost, $b_1(x(\tau_i^{*-}),\xi_i^*)$,  incurred due to the intervention by Player 2, and the cost-to-go from playing optimally afterwards.

\begin{remark}
   We remark that the discontinuities in Player 1's value function  can occur only due to interventions by Player 2.
\end{remark}
\subsection{Continuation and intervention set}\label{sec:cont_interv_set}
Player 2 solves the impulse optimal control problem \eqref{FNE:P2} for a given equilibrium strategy $\gamma^*(t,x)$ of Player 1.
\begin{assumption}
The value function of Player 2, $V_2(t,x)$, is differentiable in both $t$ and $x$ for almost all values of the state $x$.\footnote{This assumption is also made  in \cite{Bertola2016} to define (weak) QVIs (see \eqref{FBNEQVI:HJB}).}
\end{assumption}

The value function $V_2(t,x)$  associated with Player 2's impulse control problem 
satisfies the following system of (weak) QVIs
\begin{subequations}\label{eq:QVI_all}
\begin{align}
 &\frac{\partial V_2(t,x)}{\partial t}+\mathcal{H}_2\left(x,\gamma^*(t,x), \frac{\partial V_2(t,x)}{\partial x}\right)\geq 0,\, \forall t\in [0,T],\; \text{a.a. } x \in \mathbb{R}^n,\label{FBNEQVI:HJB}\\
&\forall (t,x) \in \Sigma, \text{ the following two relations hold:}\notag\\
& \qquad V_2(t,x)\leq \mathcal{R}V_2(t,x), \label{FBNEQVI:v:greater:rv}\\
& \qquad  (V_2(t,x)-\mathcal{R}V_2(t,x))\Big(\frac{\partial V_2(t,x)}{\partial t}+\mathcal{H}_2\left(x,\gamma^*(t,x), \frac{\partial V_2(t,x)}{\partial x}\right)\Big)=0, \label{FBNEQVI:complementarity}\\
& \text{and } V_2(T,x)=s_2(x(T)), \forall (T,x)\in \Sigma, \label{FBNEQVI:terminal}
\end{align}
where the Hamiltonian operator $\mathcal{H}_2$ and intervention operator $\mathcal{R}$ are defined as follows:
\begin{align}
  &  \mathcal{H}_2\left(x,\gamma^*(t,x), \frac{\partial V_2(t,x)}{\partial x}\right)=h_2(x,\gamma^*(t,x))+\left(\frac{\partial V_2(t,x)}{dx}\right)^T f(x,\gamma^*(t,x)),\\
   &  \mathcal{R}V_2(t,x)=\min_{\eta \in \Omega_2}V_2(t,x+g(x,\eta))+b_2(x,\eta).\label{eq:operatorR}
\end{align}
\end{subequations}
\begin{remark}
QVIs can be interpreted as follows:

\begin{enumerate}
    \item Condition \eqref{FBNEQVI:v:greater:rv} ensures that the value function $V_2(\cdot)$ evaluated at any $(t,x)\in \Sigma$ is at most equal to the
minimum cost that Player 2 incurs from intervening  at time $t$ and playing optimally afterwards.

\item  Player $2$ does not intervene at a time $t$ if the cost-to-go from giving an impulse at time $t$ is strictly greater than the value function $V_2(\cdot)$ evaluated at $(t,x)\in \Sigma$. Hence, when $V_2(t,x)=\mathcal{R}V_2(t,x)$, Player 2 gives
an impulse.

\item  At any $(t,x)\in \Sigma$, condition \eqref{FBNEQVI:complementarity} ensures that either Player 2 waits so that the HJB-like equation
\eqref{FBNEQVI:HJB} for Player 2 holds with equality or Player 2 gives an impulse.
\end{enumerate}
\end{remark}

\begin{remark}
The value function of Player 2, $V_2(t,x)$, can have kinks at those  time instants when the state value is at the boundary of the continuation set $\mathcal{C}$. In (single-agent) impulse control problems, the value function is assumed to be  differentiable throughout the time horizon (see \cite{Bertola2016}, \cite{Aid2020}, and the references therein).
\end{remark}

\begin{remark}
The condition $V_2(\tau,x)=\mathcal{R}V_2(\tau,x)$ results in the continuity of the value function of Player 2 at the impulse instant $\tau$ under the feedback information structure. For impulse control problems studied by using the Pontryagin maximum principle,  the Hamiltonian continuity condition \cite{Chahim2013} gives the timing of interventions   (see also \cite{Sadana2019}, where differential games with impulse control are analyzed using the impulse version of the Pontryagin maximum principle).
\end{remark}
QVIs allow us to define the continuation
and intervention sets for Player 2 as follows:
\begin{definition}
The continuation and intervention sets are given by
\begin{align}
    &\mathcal{C}=\Big\{(t,x) \in \Sigma \rvert V_2(t,x)<\mathcal{R} V_2(t,x), \frac{\partial V_2(t,x)}{\partial t}+\mathcal{H}_2\left(x,\gamma^*(t,x), \frac{\partial V_2(t,x)}{\partial x}\right) = 0\Big\},\\
     &   \mathcal{I}=\Big\{(t,x)\in \Sigma \rvert V_2(t,x)=\mathcal{R} V_2(t,x), \frac{\partial V_2(t,x)}{\partial t}+\mathcal{H}_2\left(x,\gamma^*(t,x), \frac{\partial V_2(t,x)}{\partial x}\right) \geq 0\Big\}.
\end{align}
\end{definition}

\begin{remark}
   In impulse games studied in \cite{Aid2020} and \cite{Basei2020},  the system of QVIs for any  player $j$ has an additional intervention operator to account for  impulses by the other player(s), while the Hamiltonian operator is not an explicit function of the strategies of other player(s). In our game problem, the Hamiltonian operator of Player 2 depends on the strategies of Player 1, which in turn continuously affects the continuation and intervention sets of Player 2. Further, in the infinite-horizon impulse game studied in \cite{Aid2020}, the continuation sets depend only on the current state.
\end{remark}
\begin{assumption}\label{FBNEassum:Unique_impulse}
There exists a unique measurable function $\zeta:[0,T]\times \mathbb{R}^n\rightarrow \Omega_2$  such that
\begin{align}\label{FBNEeq:optimal_impulse}
    \zeta(t,x)=\arg \min_{\eta\in \Omega_2} \{V_2(t,x+g(x,\eta))+b_2(x,\eta)\}.
\end{align}
\end{assumption}
Here, \eqref{FBNEeq:optimal_impulse} gives the optimal impulse level at any $(t,x)$ since it minimizes the sum of the immediate cost ($b_2(x,\eta)$) incurred from giving an impulse of size $\eta$ and the  cost-to-go from playing optimally afterwards (see also \cite{Aid2020}, where a similar assumption is used to solve stochastic impulse games).

\noindent
We have the following assumptions regarding the state dynamics \eqref{FBNEcont_state:model}--\eqref{FBNEjump_state:model} and the objective functions described by \eqref{FBNEOBJ1:model}--\eqref{FBNEOBJ2:model}:

\begin{assumption}\label{FBNEassum:lipschitz}
The state dynamics and objective functions of Player 1 and Player 2 satisfy the following conditions:
\begin{enumerate}
\item \label{FBNEassum:lipschitz:f} $f(x,u)$ is  (uniformly) Lipschitz continuous in $x$, that is, there exists a constant $c_f>0$, such that
\begin{align*}
    &\lvert f(x,u)-f(y,u)\rvert \leq c_f\lvert x-y \rvert,\; \forall x,y \in \mathbb{R}^n,\;u\in \Omega_1.
\end{align*}
\item \label{FBNEassum:lipschitz:g} $g(x,\xi)$ is (uniformly) Lipschitz continuous in $x$, such that, for $c_g>0$, we have
\begin{align*}
    &\lvert g(x,\xi)-g(y,\xi)\rvert \leq c_g \lvert x-y \rvert\; \forall x,y \in \mathbb{R}^n,\;\xi\in \Omega_2.
\end{align*}
\item \label{FBNEassum:lipschitz:cost} $\forall x\in \mathbb{R}^n$,  $\inf_{\eta\in \Omega_2}b_2(x,\eta)=\mu>0.$
\item \label{FBNEassum:bounds}The functions $f$, $g$, $h_i$, $b_i$ and $s_i$ are bounded for $i\in \{1,2\}$.
\end{enumerate}
\label{FBNEassum:dynamics}
\end{assumption}

Assumptions 1.\ref{FBNEassum:lipschitz:f} and 1.\ref{FBNEassum:lipschitz:g} ensure that there exists a unique state trajectory $x(\cdot)$ for any measurable $u(\cdot)$ and impulse sequence $\{(\tau_i,\xi_i)\}_{i\geq 1}$.  Assumption 1.\ref{FBNEassum:lipschitz:cost} ensures that Player $2$ intervenes only a finite number of times in the game due to the fixed cost associated with each impulse (see \cite{Bertola2016}, where similar assumptions are provided in the context of an impulse optimal control problem). Assumption 1.\ref{FBNEassum:bounds} is used later to show that  the value functions of Player 1 and Player 2 have an upper and lower bound that depend  on the problem parameters.

\noindent

The sufficient conditions to characterize the FNE of the differential game described in \eqref{FBNEcont_state:strategies}-\eqref{FBNEOBJ2:strategies} are given in the next theorem.

\begin{theorem}[Verification Theorem] \label{FBNEthm:verif}  Let Assumptions \ref{assum:Vi}-\ref{FBNEassum:lipschitz} hold. Suppose there exist functions $V_i:[0,T]\times \mathbb{R}^n\rightarrow \mathbb{R} (i=1,2)$ such that $V_1(t,x)$ satisfies the
HJB equations \eqref{HJB_all} and $V_2(t,x)$ satisfies   the QVIs \eqref{eq:QVI_all} for all $(t,x)\in \Sigma$. Suppose there exist
strategies $(\gamma^*,\delta^*)$ with the following properties. Player 1's control $u^*(t)=\gamma^*(t,x)$ satisfies for all $t\in [0,T]$
\begin{subequations}
\begin{align}
    u^*(t)=\gamma^*(t,x)=\arg\min_{\varphi\in \Omega_1}\mathcal{H}_1\left(x,\varphi,\frac{\partial V_1(t,x)}{\partial x}\right), \label{eq:thm:ustar}
\end{align}
and Player 2's impulse control is a sequence $\{(\tau_j^*,\xi_j^*)\}_{j\geq 1}$ where interventions occur at $\tau_j^*=t$ if $(t,x)\in \mathcal{I}$, that is, $(t,x)$ satisfy
\begin{align}
    V_2(t,x)=\mathcal{R}V_2(t,x),
\end{align}
and the corresponding impulse levels $\xi_j^*$ are given by
\begin{align}
    \xi_j^*=\zeta(t,x)=\arg\min_{\eta\in \Omega_2}\left(V_2(t,x+g(x,\eta))+b_2(x,\eta)\right).\label{eq:thmimpulse}
\end{align}
\end{subequations}
Then, $(\gamma^*,\delta^*)$ is a FNE of the differential game described by (\ref{FBNEcont_state:strategies}--\ref{FBNEOBJ2:strategies}).  Further, $V_i(t,x)$  is the equilibrium cost-to-go of Player $i$, $(i=1,2)$ for the subgame starting at $(t,x) \in \Sigma$ and defined over the horizon $[t,T]$.
\end{theorem}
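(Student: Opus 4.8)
The plan is to verify the two Nash inequalities in Definition \ref{def:FNE} separately by a dynamic-programming / "value-function-is-a-supersolution, with equality along the candidate trajectory" argument, exactly as in classical verification theorems but adapted to the hybrid (continuous plus impulse) dynamics. Fix an arbitrary starting point $(t,x)\in\Sigma$. First I would treat Player 1's inequality \eqref{FNE:P1}: hold $\delta^*$ fixed, let $\gamma_{[t,T]}\in\Gamma_{[t,T]}$ be any admissible strategy, and let $x(\cdot)$ be the resulting trajectory with its impulse instants $\{\tau_j\}$ (determined by the continuation set $\mathcal C$ attached to $\delta^*$). On each open interval $(\tau_j,\tau_{j+1})$ the state stays in $\mathcal C$, so I can differentiate $s\mapsto V_1(s,x(s))$, use Assumption \ref{assum:Vi}, and invoke the HJB inequality implicit in \eqref{FBNEeq:HJB:V1}: $-\partial_t V_1 \le \mathcal H_1(s,\varphi,\partial_x V_1)$ for every $\varphi\in\Omega_1$, hence $\frac{d}{ds}V_1(s,x(s)) = \partial_t V_1 + (\partial_x V_1)^T f(x(s),\gamma(s,x(s))) \ge -h_1(x(s),\gamma(s,x(s)))$. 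Integrating over $(\tau_j,\tau_{j+1})$ and summing the telescoping jumps, where at each $\tau_j$ the update rule \eqref{FBNEeq:update:V1} gives $V_1(\tau_j^-,x(\tau_j^-)) = V_1(\tau_j^-, x(\tau_j^-)+g(\cdot))+b_1(\cdot) = V_1(\tau_j^+,x(\tau_j^+))+b_1(\cdot)$, together with the terminal condition \eqref{FBNEeq:terminal:V1}, yields $V_1(t,x)\le J_1(t,x,\gamma_{[t,T]},\delta^*_{[t,T]})$. Running the same computation with $\gamma^*$ from \eqref{eq:thm:ustar} makes the HJB inequality an equality at every $s$, giving $V_1(t,x)=J_1(t,x,\gamma^*_{[t,T]},\delta^*_{[t,T]})$; combining the two proves \eqref{FNE:P1} and identifies $V_1$ as Player 1's equilibrium cost-to-go.

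Next I would handle Player 2's inequality \eqref{FNE:P2}: hold $\gamma^*$ fixed and let $\delta_{[t,T]}\in\Delta_{[t,T]}$ be arbitrary, generating impulse times $\{\sigma_k\}$ and levels $\{\eta_k\}$ with resulting trajectory $y(\cdot)$. The argument is the symmetric one but now driven by the QVIs \eqref{eq:QVI_all}: on the complement of the impulse times, \eqref{FBNEQVI:HJB} gives $\partial_t V_2 + \mathcal H_2(y,\gamma^*,\partial_x V_2)\ge 0$, so $\frac{d}{ds}V_2(s,y(s)) \ge -h_2(y(s),\gamma^*(s,y(s)))$; at each of Player 2's own impulse times $\sigma_k$ the inequality \eqref{FBNEQVI:v:greater:rv}, $V_2(\sigma_k^-,y(\sigma_k^-))\le \mathcal R V_2(\sigma_k^-,y(\sigma_k^-)) \le V_2(\sigma_k^-, y(\sigma_k^-)+g(\cdot,\eta_k))+b_2(\cdot,\eta_k) = V_2(\sigma_k^+,y(\sigma_k^+))+b_2(\cdot,\eta_k)$, supplies the jump bound in the right direction. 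Telescoping and using the terminal condition \eqref{FBNEQVI:terminal} gives $V_2(t,x)\le J_2(t,x,\gamma^*_{[t,T]},\delta_{[t,T]})$. For the candidate $\delta^*$: between consecutive equilibrium impulses the state lies in $\mathcal C$, where the complementarity relation \eqref{FBNEQVI:complementarity} forces the HJB-like equation \eqref{FBNEQVI:HJB} to hold with equality, so the running-cost inequality becomes an equality; at each $\tau_j^*$ the impulse level $\xi_j^*=\zeta(\tau_j^*,x)$ from \eqref{eq:thmimpulse} attains the minimum in the operator $\mathcal R$, and since $(\tau_j^*,x)\in\mathcal I$ we have $V_2=\mathcal RV_2$ there, turning the jump inequality into an equality too. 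Hence $V_2(t,x)=J_2(t,x,\gamma^*_{[t,T]},\delta^*_{[t,T]})$, which together with the previous bound gives \eqref{FNE:P2} and identifies $V_2$.

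A few technical points need care and will, I expect, be the main obstacles. The first is the jump-matching step for $V_1$: I used $V_1(\tau_j^-,x(\tau_j^-)+g(\cdot))=V_1(\tau_j^+,x(\tau_j^+))$, which is just continuity of $V_1$ across the post-jump point in the time variable together with $x(\tau_j^+)=x(\tau_j^-)+g(\cdot)$; one must check that the update rule \eqref{FBNEeq:update:V1} is exactly what makes the telescoping sum collapse to the intervention costs $\sum b_1$, and in particular that $V_1$ is continuous at $(\tau_j^+,\cdot)$ (the Remark following the HJB system asserts discontinuities arise only from Player 2's interventions, so this is consistent). The second, more delicate point is that Player 2's value function $V_2$ is only differentiable for almost all $x$ and may have kinks when the state sits on $\partial\mathcal C$ (noted in the Remarks); so the computation $\frac{d}{ds}V_2(s,y(s))$ must be justified as an absolutely-continuous-composition argument — e.g. the set of times $s$ at which $y(s)\in\partial\mathcal C$ is Lebesgue-null (or handled via the weak/viscosity formulation of the QVIs), and the chain rule holds a.e. along the trajectory. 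The third point is admissibility: Definition \ref{FBNEassum_interior:LQDGFB} requires finitely many impulses, and for the candidate $\delta^*$ one should note the bound on the number of interventions (the paper's second contribution, using Assumption 1.\ref{FBNEassum:lipschitz:cost}, $\inf_\eta b_2(x,\eta)=\mu>0$) guarantees the telescoping sums are finite and the argument is not vacuous. Finally, existence of the minimizers in \eqref{eq:thm:ustar} and \eqref{eq:thmimpulse} as measurable selections is exactly Assumption \ref{FBNEassum:Unique_impulse} and the continuity/compactness of $\Omega_1,\Omega_2$, so measurability of $\gamma^*,\delta^*$ is in hand.
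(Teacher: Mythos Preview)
Your proposal is correct and follows essentially the same approach as the paper: telescoping $V_1$ along the trajectory between impulse instants using the HJB inequality \eqref{FBNEeq:HJB:V1} and the update rule \eqref{FBNEeq:update:V1} for Player~1, and telescoping $V_2$ using the QVI supersolution inequality \eqref{FBNEQVI:HJB} together with the obstacle bound \eqref{FBNEQVI:v:greater:rv} at jumps for Player~2, with equality along $(\gamma^*,\delta^*)$ supplied by \eqref{eq:thm:ustar}, the complementarity \eqref{FBNEQVI:complementarity}, and the choice \eqref{eq:thmimpulse}. Your discussion of the technical caveats (a.e.\ differentiability of $V_2$, finiteness of impulses via Assumption~1.\ref{FBNEassum:lipschitz:cost}, measurable selection) is more explicit than the paper's, but the skeleton of the argument is identical.
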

\begin{proof}
From Definition \ref{def:FNE}, we have to show that
\begin{align*}
  &  V_j(t,x)=J_j(t,x,\gamma^*_{[t,T]},\delta^*_{[t,T]}), \, j=\{1,2\},\\
   & V_1(t,x)\leq J_1(t,x,\gamma_{[t,T]},\delta^*_{[t,T]}), \forall \gamma_{[t,T]} \in \Gamma_{[t,T]},\\
   & V_2(t,x)\leq J_2(t,x,\gamma^*_{[t,T]},\delta_{[t,T]}), \forall \delta_{[t,T]} \in \Delta_{[t,T]}.
\end{align*}
Suppose $x_1(\cdot)$ is the state trajectory generated by Player 1 using an arbitrary admissible strategy $\gamma_{[t,T]}$ and Player 2 using the strategy $\delta_{[t,T]}^*$ such that Player 1's control $u(t)$ is given by $u(s)=\gamma_{[t,T]}(s,x(s))$, $s\in [t,T]$. Using the total derivative of $V_1(\cdot)$ between the impulse instants $(\tau_{j-1}^*,\tau_{j}^*)$, integrating with respect to $t$ from $\tau_{j-1}^*$ to $\tau_{j}^*$, and  taking the summation for all $j\geq 1$, we obtain
\begin{align*}
V_1(T,x_1(T))-V_1(t,x)= &\sum_{j\geq 1}\int_{ \tau_{j-1}^{*+}}^{ \tau_{j}^{*-}}\Big(\frac{\partial V_1}{\partial t}(s,x_1(s)) + \left(\frac{\partial V_1}{\partial x}(s,x_1(s))\right)^Tf(x_1(s),u(s))\Big)ds\\& \quad+\sum_{j\geq 1}\mathbbm{1}_{t\leq \tau_j^*< T}(V_1(\tau_j^{*+},x_1(\tau_j^{*+}))-V_1(\tau_j^{*-},x_1(\tau_j^{*-}))),
\end{align*}
where we defined $\tau_0^*:=t$.
 From \eqref{FBNEeq:HJB:V1}, we know that, for any given control $u(s)$, the following inequality holds:
\begin{align}
   &\frac{\partial V_1}{\partial t}(s,x_1(s)) + \left(\frac{\partial V_1}{\partial x}(s,x_1(s))\right)^Tf(x_1(s),u(s))\geq -h_1(x_1(s),u(s)).
\end{align}
Therefore, we obtain
\begin{align*}
V_1(T,x_1(T))-V_1(t,x)&\geq -\sum_{j\geq 1}\int_{ \tau_{j-1}^{*+}}^{ \tau_{j}^{*-}} h_1(x_1(s),u(s))ds+\sum_{j\geq 1}\mathbbm{1}_{t\leq \tau_j^*< T}(V_1(\tau_j^{*+},x_1(\tau_j^{*+}))-V_1(\tau_j^{*-},x_1(\tau_j^{*-}))).
\end{align*}
From the terminal condition \eqref{FBNEeq:terminal:V1} on $V_1(\cdot)$ and \eqref{FBNEeq:update:V1}, we obtain
\begin{align*}
    V_1(t,x)&\leq  s_{1}(x(T))+ \sum_{j\geq 1}\int_{ \tau_{j-1}^{*+}}^{ \tau_{j}^{*-}}h_1(x_1(s),u(s))ds+\sum_{j\geq 1}\mathbbm{1}_{t\leq \tau_j^*< T}~ b_1(x_1(\tau _{j}^{*-}),\xi_j^*)\\&\quad=J_1(t,x,{\gamma}_{[t,T]},\delta_{[t,T]}^*).
\end{align*}
For a strategy $\gamma^*$ of Player 1, \eqref{eq:thm:ustar} holds for the equilibrium control $u^*(t)$ of Player 1, so we obtain
\begin{align*}
    V_1(t,x)&=  s_{1}(x^*(T))+ \sum_{j\geq 1}\int_{ \tau_{j-1}^{*+}}^{ \tau_{j}^{*-}}h_1(x^*(s),u^*(s))ds+\sum_{j\geq 1}\mathbbm{1}_{t\leq \tau_j^*< T}~b_1(x^*(\tau _{j}^{*-}),\xi_j^*)\\&\quad=J_1(t,x,{\gamma^*}_{[t,T]},\delta^*_{[t,T]}),
\end{align*}
where $x^*$ is the state trajectory generated by Player 1 choosing the strategy $\gamma^*_{[t,T]}$ and Player 2 choosing the strategy $\delta^*_{[t,T]}$.
Therefore, $\gamma^*_{[t,T]}$ is the best response to Player 2's strategy $\delta^*_{[t,T]}$.

\noindent
Next, we consider an arbitrary admissible strategy $\delta_{[t,T]}$ of Player 2 such that the intervention instants are given by $\tau_i,\, i\geq 1$ and the corresponding impulse levels are given by $\xi_i$. Further, $x_2(\cdot)$ is the state trajectory generated by the strategy pairs $(\gamma^*_{[t,T]},\delta_{[t,T]})$. We obtain the following relation by taking the total derivative of $V_2(\cdot)$ between the impulse instants $(\tau_{j-1},\tau_{j})$, integrating over time from $\tau_{j-1}$ to $\tau_{j}$, and taking the summation for all $j\geq 1$:
\begin{align}
V_2(T,x_2(T))-V_2(t,x)&= \sum_{j\geq 1}\int_{ {\tau}^{+}_{j-1}}^{ {\tau}_{j}^{-}}\Big(\frac{\partial V_2(s,x_2(s))}{\partial s} + \left(\frac{\partial V_2(s,x_2(s))}{\partial x}\right)^Tf(x_2(s),\gamma^*(s,x_2(s)))\Big)ds\notag\\&\quad+\sum_{j\geq 1}\mathbbm{1}_{t\leq {\tau}_j< T}(V_2({\tau}_j^+,x_2({\tau}_j^+))-V_2({\tau}_j^-,x_2({\tau}_j^-))). \label{FBNEproof:veri:Taylor}
\end{align}
The value function satisfies \eqref{FBNEQVI:HJB} for all $(t,x) \in \Sigma$, so we have 
\begin{align}
   & \frac{\partial V_2}{\partial t}(s,x_2(s)) + \left(\frac{\partial V_2}{\partial x}(s,x_2(s))\right)^Tf(x_2(s),\gamma^*(s,x_2(s)))\geq -h_2(x_2(s),\gamma^*(s,x_2(s)). \label{FBNEeq1}
\end{align}
Given an impulse of size, $\xi_j$, and from the definition of an intervention operator given in \eqref{eq:operatorR}, we obtain $$\R V_2({\tau}_j^-,x_2({\tau}_j^-))\leq V_2({\tau}_j^+,x_2({\tau}_j^{+}))+b_2(x_2({\tau}_j^-),{\xi_j}).$$
Also, from \eqref{FBNEQVI:v:greater:rv}, we know that $$\R V_2({\tau}_j^-,x_2({\tau}_j^-))-V_2({\tau}_j^-,x_2({\tau}_j^-))\geq 0.$$ Therefore, we obtain
  \begin{align}
    V_2({\tau}_j^+,x_2({\tau}_j^{+}))-V_2({\tau}_j^-,x_2({\tau}_j^-))&\geq\R V_2({\tau}_j^-,x_2({\tau}_j^-))-V_2({\tau}_j^-,x_2({\tau}_j^-))-b_2(x_2({\tau}_j^-),{\xi_j})\geq  -b_2(x_2({\tau}_j^-),{\xi_j}).\label{FBNEeq2}
\end{align}
Substitute \eqref{FBNEeq1}
and \eqref{FBNEeq2} in \eqref{FBNEproof:veri:Taylor} to obtain \begin{align*}
    V_2(T,x_2(T))-V_2(t,x)&\geq  \sum_{j\geq 1}\int_{ {\tau}_{j-1}^+}^{ {\tau}_{j}^-} -h_2(x_2(s),\gamma^*(s,x(s))ds-\sum_{j\geq 1}\mathbbm{1}_{t\leq {\tau}_j< T}~b_2(x({\tau}_j^-),{\xi_j}).
\end{align*}
Substituting the terminal condition $V_2(T,x_2(T))=s_2(x_2(T))$, given in \eqref{FBNEQVI:terminal}, in the above inequality yields
\begin{align*}
 V_2(t,x)&\leq  s_2(x_2(T))+ \sum_{j\geq 1}\int_{ {\tau}_{j-1}^+}^{ {\tau}_{j}^-} h_2(x_2(s),\gamma^*(s,x_2(s))ds\\&\quad +\sum_{j\geq 1}\mathbbm{1}_{t\leq {\tau}_j< T}~b_2(x_2({\tau}_j^-),\xi_j)=J_2(t,x,\gamma_{[t,T]}^*,\delta_{[t,T]}).
\end{align*}
The strategy $\delta^*$ of Player 2 entails giving impulses at $\tau_j^*=t$ where the pair $(t,x(t))\in \Sigma$ is such that \eqref{FBNEQVI:v:greater:rv} holds with equality, and the corresponding impulse levels $\xi_j^*$ satisfy \eqref{eq:thmimpulse}. Therefore,  for a strategy $\delta^*$, we obtain
\begin{align*}
    &V_2({\tau}_j^{*+},x^*({\tau}_j^{*+}))-V_2({\tau}_j^{*-},x^*({\tau}_j^{*-}))=-b_2(x^*({\tau}_j^{*-}),{\xi_j^*}),\\
   & \frac{\partial V_2}{\partial t}(s,x^*(s)) + \left(\frac{\partial V_2}{\partial x}(s,x^*(s))\right)^Tf(x^*(s),\gamma^*(s,x^*(s)))= -h_2(x^*(s),\gamma^*(s,x^*(s)),
\end{align*}
and the cost-to-go function is given by
\begin{align*}
  V_2(t,x)&= s_2(x^*(T))+ \sum_{j\geq 1}\int_{ {\tau}_{j-1}^{*+}}^{ {\tau}^{*-}_{j}} h_2(x^*(s),\gamma^*(s,x^*(s))ds+\sum_{j\geq 1}b_2(x^*({\tau}_j^{*-}),\xi_j^*)\\&\quad=J_2(t,x,\gamma_{[t,T]}^*,{\delta^*_{[t,T]}}).
\end{align*}
Therefore, $\delta^*$ is the best response strategy to Player 1's strategy $\gamma^*$.
\end{proof}
\begin{remark}
An important feature of the FNE solution introduced in Definition \ref{def:FNE} is that if the strategy pair $(\gamma^*,\delta^*)$
provides a FNE to differential game described by (\ref{FBNEcont_state:strategies}--\ref{FBNEOBJ2:strategies}) with duration $[0,T]$, then its restriction to the time
interval $[t,T]$, denoted by $(\gamma^*_{[t,T]},\delta_{[t,T]}^*)$, provides a FNE to the same differential game defined on the shorter
time interval $[t,T]$, with any initial state $x(t)$. Since, this property holds true for all $0\leq t\leq T$ and for all  state values $x(t)$, the
FNE $(\gamma^*,\delta^*)$ is strongly time consistent.
\end{remark}

\noindent
Next, we show that there can only be a finite number of impulses during the game.
\begin{proposition}\label{FBNEprop_bound_number}
Let Assumption \ref{FBNEassum:lipschitz} hold. Then, the value functions of Player 1 and Player 2 are bounded. The equilibrium number of impulses $K\in \mathbb{N}$ is  bounded by
\begin{align}\label{eq:bound:number:impulse}
   K =\ceil {\frac{2\left(T\|h_2\|_{\infty}+\|s_2\|_{\infty}\right)}{\mu}},
\end{align}
where $\mu=\inf_{\eta \in \Omega_2}b_2(x,\eta)>0,\; \forall x\in \mathbb{R}^n$, and $\ceil{y}$ denotes the smallest integer that is greater than or equal to $y$.
\end{proposition}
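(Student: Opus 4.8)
The plan is to leverage the comparison characterization furnished by Theorem~\ref{FBNEthm:verif}, namely $V_i(t,x)=J_i(t,x,\gamma^*_{[t,T]},\delta^*_{[t,T]})$ along the equilibrium, together with a telescoping estimate for $V_2$ along the equilibrium trajectory of the same kind already used in the proof of the verification theorem. I would organize the argument in three steps: first bound $V_2$, then bound the equilibrium number of impulses $K$, and finally deduce that $V_1$ is bounded.

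\emph{Step 1 (boundedness of $V_2$).} For the upper bound, observe that the policy with continuation set $\mathcal{C}=\Sigma$ (Player~2 never intervenes) is an admissible competitor in the infimum defining $V_2(t,x)$: it produces the empty impulse sequence, which is admissible by Definition~\ref{FBNEassum_interior:LQDGFB}. Evaluating $J_2$ along the corresponding jump-free trajectory $x^0(\cdot)$, which exists and is unique by the Lipschitz condition on $f$ and boundedness of $\gamma^*$, and using that $h_2$ and $s_2$ are bounded (Assumption~\ref{FBNEassum:lipschitz}), gives $V_2(t,x)\le T\|h_2\|_\infty+\|s_2\|_\infty$ for all $(t,x)\in\Sigma$. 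For the lower bound, use the equilibrium representation $V_2(t,x)=J_2(t,x,\gamma^*_{[t,T]},\delta^*_{[t,T]})$: since every intervention cost satisfies $b_2\ge\mu>0$, the impulse sum is nonnegative, whence $V_2(t,x)\ge-(T\|h_2\|_\infty+\|s_2\|_\infty)$. Thus $|V_2(t,x)|\le T\|h_2\|_\infty+\|s_2\|_\infty$.

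\emph{Step 2 (bound on $K$).} Let $x^*(\cdot)$ be the equilibrium trajectory from $(t,x)$ with intervention instants $\tau_1^*<\cdots<\tau_K^*$ (finitely many, by admissibility of $\delta^*$), and set $\tau_0^*:=t$, $\tau_{K+1}^*:=T$. Integrating $\frac{d}{ds}V_2(s,x^*(s))$ over each continuation interval $(\tau_{j-1}^{*+},\tau_j^{*-})$ and adding the jumps at the $\tau_j^*$ yields
\begin{align*}
V_2(T,x^*(T))-V_2(t,x)&=\sum_{j=1}^{K+1}\int_{\tau_{j-1}^{*+}}^{\tau_j^{*-}}\Big(\frac{\partial V_2}{\partial s}(s,x^*(s))+\Big(\frac{\partial V_2}{\partial x}(s,x^*(s))\Big)^{T}f(x^*(s),\gamma^*(s,x^*(s)))\Big)ds\\
&\quad+\sum_{j=1}^{K}\big(V_2(\tau_j^{*+},x^*(\tau_j^{*+}))-V_2(\tau_j^{*-},x^*(\tau_j^{*-}))\big).
\end{align*}
On each continuation interval the state lies in $\mathcal{C}$, so \eqref{FBNEQVI:HJB} holds there with equality and the integrand equals $-h_2(x^*(s),\gamma^*(s,x^*(s)))$; hence the first sum is at most $(T-t)\|h_2\|_\infty\le T\|h_2\|_\infty$. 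At each $\tau_j^*$ the equilibrium condition $V_2(\tau_j^{*-},\cdot)=\mathcal{R}V_2(\tau_j^{*-},\cdot)=V_2(\tau_j^{*+},\cdot)+b_2(x^*(\tau_j^{*-}),\xi_j^*)$ makes the $j$-th jump equal to $-b_2(x^*(\tau_j^{*-}),\xi_j^*)\le-\mu$, so the second sum is at most $-K\mu$. Combining these with $V_2(T,x^*(T))=s_2(x^*(T))$ and the bounds from Step~1 gives $K\mu\le T\|h_2\|_\infty-s_2(x^*(T))+V_2(t,x)\le 2\big(T\|h_2\|_\infty+\|s_2\|_\infty\big)$; since $K\in\mathbb{N}$, this yields $K\le\ceil{2(T\|h_2\|_\infty+\|s_2\|_\infty)/\mu}$.

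\emph{Step 3 (boundedness of $V_1$) and the main obstacle.} By Theorem~\ref{FBNEthm:verif}, $V_1(t,x)=J_1(t,x,\gamma^*_{[t,T]},\delta^*_{[t,T]})$, and since the equilibrium carries at most $K<\infty$ impulses, boundedness of $h_1,b_1,s_1$ gives $|V_1(t,x)|\le T\|h_1\|_\infty+K\|b_1\|_\infty+\|s_1\|_\infty<\infty$. The step I expect to require the most care is the telescoping identity in Step~2: $V_2$ is only differentiable almost everywhere and may have kinks where $x^*(\cdot)$ touches $\partial\mathcal{C}$, so the decomposition into continuation integrals and jumps along the piecewise absolutely continuous trajectory $x^*$ must be justified exactly as in the proof of Theorem~\ref{FBNEthm:verif}; one must likewise confirm that the never-intervene policy is a legitimate element of $\Delta_{[t,T]}$. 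Both points are routine but should be spelled out.
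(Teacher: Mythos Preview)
Your argument is correct in substance, but it takes a different and heavier route than the paper, and in doing so it imports hypotheses that the proposition does not assume. The proposition requires only Assumption~\ref{FBNEassum:lipschitz}, yet you invoke Theorem~\ref{FBNEthm:verif}, which presupposes Assumptions~\ref{assum:Vi}--\ref{FBNEassum:lipschitz} (differentiability of $V_1$, $V_2$, and the existence of a unique minimizer $\zeta$). So as written your proof establishes the result only under stronger hypotheses than stated.

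The paper's proof avoids the verification theorem entirely. It works directly from the definition of $V_2$ as an infimum: the never-intervene policy gives the upper bound $V_2(t,x)\le \|h_2\|_\infty(T-t)+\|s_2\|_\infty$; for the lower bound and the impulse count, it takes an $\epsilon$-optimal strategy $v^\epsilon$ with $N(v^\epsilon)$ impulses and estimates $J_2$ from below termwise, $J_2\ge -\|h_2\|_\infty(T-t)+\mu N(v^\epsilon)-\|s_2\|_\infty$, then combines with the upper bound and lets $\epsilon\to0$. This is both simpler and more general: it bounds the number of impulses for \emph{any} near-optimal strategy, not only the equilibrium one, and it uses nothing beyond boundedness of $h_2,s_2$ and $b_2\ge\mu$. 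Your telescoping in Step~2 is in fact redundant once you accept $V_2(t,x)=J_2(t,x,\gamma^*,\delta^*)$, because estimating $J_2$ directly yields the same inequality without touching the QVIs; the detour through the QVI equalities rederives that identity rather than adding information. Likewise, for $V_1$ the paper just plugs in the feasible control $\gamma\equiv 0$ and the bound $K$ on impulses, again without appealing to Theorem~\ref{FBNEthm:verif}. If you drop the reliance on Theorem~\ref{FBNEthm:verif} and replace your Step~2 telescoping by the direct lower bound on $J_2$, your proof collapses to the paper's.
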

\begin{proof}
See Appendix \ref{FBNEproof:prop:number:upper}. 
\end{proof}

QVIs have been solved in the literature under some restrictive assumptions on the value functions, even for games with linear objective functions, see e.g., \cite{Aid2020} and \cite{Campi2020}. An additional difficulty in our case is that the QVIs are coupled with HJB equations associated with Player 1's best response.  In the next section, we specialize our results to linear-quadratic differential games and  provide a complete analytical characterization of the FNE strategies. 

\section{A scalar linear-quadratic differential game with targets}\label{FBNEsec:LQDG:FB}

In this section, we consider a scalar linear-quadratic adaptation of the differential game (\ref{FBNEcont_state:model}-\ref{FBNEOBJ2:model}), referred to as iLQDG hereafter. Player 1 and Player 2 aim to minimize the costs resulting from the deviation of the state away from their target state values $\rho_1$ and $\rho_2$, respectively. In our model,  the structure of Player 2's problem (objective functions and state dynamics) is  inspired by the impulse optimal control problem analyzed in \cite{Bertola2016}.
\begin{subequations}\label{FBNEiLQDG}
\begin{align}
   & \text{(iLQDG): }\notag\\
    &J_{1}(0,x_{0},u(\cdot),\tilde{v}) =\int_{0}^{T}\frac{1}{2}\left(
w_{1}(x(t)-\rho_1)^2+r_{1}u(t)^2\right) dt +\sum\limits_{i=1}^{k}z_1\lvert \xi_i  \rvert +\frac{1}{2}s_{1}(x(T)-\rho_1)^2,\\
&J_{2}(0,x_{0},u(\cdot),\tilde{v}) =\int_{0}^{T}\frac{1}{2}
w_{2}(x(t)-\rho_2)^2 dt 
+\sum\limits_{i=1}^{k}h(\xi_i)  +\frac{1}{2}s_{2}(x(T)-\rho_2)^2,\\
    &\dot{x}(t)=ax(t)+bu(t),\;x(0^{-})=x_{0},\,\forall t\neq \tau_i,\, i\geq 1,  \label{FBNEeq:LQDG:FBstateq}\\
 & x(\tau_i^{+})=x(\tau_i^{-})+\xi_i, \label{FBNEeq:LQDG:FB_jump}
\end{align}
\label{FBNEeq:iLQDG:FB}
\end{subequations}
where 
\begin{align}
    h(\xi_i):=\begin{cases} C+c \xi_i& \text{if }\, \xi_i>0\\
    \min (C,D) & \text{if }\, \xi_i=0\\
    D-d\xi_i& \text{if }\,\xi_i<0,
    \end{cases}
\end{align}
and $w_1,\, r_1,\,z_1,\, s_1,\, w_2,\, s_2,\, C,\,D,\,c,\,d$ are positive constants.\\

\noindent
In the above iLQDG, the impulse can be positive, negative, or $0$. Each intervention results in fixed  costs, equal to $C$ or $D$, for Player 2, even if the magnitude of the impulse at the intervention instant $\tau_i$ is $0$. Player 1 incurs a positive cost $z_1\lvert \xi_i\rvert$ due to interventions by Player 2. We can view $z_1\lvert \xi_i\rvert$ as the cost associated with the disruption of Player 1's resources due to Player 2's actions.

The continuation set of Player 2 is described in the following assumption  (see also \cite{Bertola2016} and \cite{Runggaldier2018}):

\begin{assumption}\label{assum:continuationset:ilqdg}
Player $2$ gives an impulse if $(t,x)$ does not lie in the continuation set $\mathcal{C}$ given by
\begin{eqnarray}
    \mathcal{C}=\{(t,x)\in \Sigma~ \rvert~\ell_1(t)<x<\ell_2(t)\}.
\end{eqnarray}
Player 2 shifts the state to $\alpha(t)$ if $x\leq\ell_1(t)$, and to $\beta(t)$ if $x \geq \ell_2(t)$, so that the following relation holds:
\begin{eqnarray}
    \ell_1(t)<\alpha(t)<\beta(t)<\ell_2(t).
\end{eqnarray}
\end{assumption}
 The threshold policy of Player 2 involves determining the boundaries $\ell_1(\cdot)$ and $\ell_2(\cdot)$ of the continuation region $\mathcal{C}$ as well as the values  $\alpha(\cdot)$ and $\beta(\cdot)$, to which Player 2 shifts the state whenever the state reaches the boundaries $\ell_1(\cdot)$ or $\ell_2(\cdot)$, respectively.  The functions $\ell_1(\cdot)$,  $\alpha(\cdot)$, $\beta(\cdot)$, and $\ell_2(t)$ are obtained from the QVIs.
 
 \begin{assumption}\label{assum:feedback:Player1}
The state feedback strategy of Player 1 defined in the continuation set $\mathcal{C}$ is given by $\gamma(t,x)=p_1(t)x+q_1(t)$ where the real valued functions $p_1:[0,T]\rightarrow \mathbb R$ and $q_1:[0,T]\rightarrow \mathbb R$ are continuous.
 \end{assumption}
 
It is to be noted that the above assumption allows for discontinuities in the control of Player 1 at the impulse instants due to the corresponding jumps in the state. However, for a given state value in the continuation set $\mathcal{C}$,  Player 1's strategy is continuous in $t$ and $x$.

\noindent
We make the following assumption on the admissible controls of Player 1 and Player 2:
\begin{assumption}\label{assum:interior}
The admissible control $u$ of Player 1  and impulse size $\xi$ for Player 2 lie in the interior of the bounded and open convex sets $\Omega_1$ and $\Omega_2$, respectively
\end{assumption}
 %%%%----
 %%%-----
 %SUBSECTION
 %%%----
 %%%----
\subsection{Optimal control problem of Player 1}
Let the equilibrium strategy of Player 2 be given by $\delta^*$ such that Player 2 gives an impulse if the state leaves the continuation set $\mathcal{C}$ described in Assumption \ref{assum:continuationset:ilqdg}.  Then, the equilibrium strategy of Player 1 can be determined by finding the value function that satisfies \eqref{FBNEeq:HJB:V1}-\eqref{FBNEeq:update:V1} for the iLQDG. 

Player 1 solves a linear-quadratic optimal control problem in the continuation region $\mathcal{C}$, and at the impulse instant $\tau_i$, Player 1's cost is given by $z_1 \lvert \xi_i \rvert$. Therefore, we can make the following guess on the form of the value function of Player 1:
\begin{assumption}\label{assum:V1form}
The value function of Player 1 is given by:
\begin{align}\label{FBNEstructure:V1}
    V_1(t,x)=\begin{cases}
    \Phi_1(t,\alpha(t))+z_1\lvert\alpha(t)-x\rvert &x \leq \ell_1(t)\\
  \Phi_1(t,x) & x\in (\ell_1(t),\ell_2(t))\\
     \Phi_1(t,\beta(t))+z_1\lvert \beta(t)-x\rvert & x \geq \ell_1(t)
     \end{cases}.
    \end{align}
Since the game is  linear-quadratic,   $\Phi_1$  is quadratic in the state: 
\begin{align}
  \Phi_1(t,x)=\frac{1}{2}p_1(t)x^2+q_1(t)x+n_1(t). \label{FBNEV1def_periodic}
\end{align}    
\end{assumption}
The equilibrium control of Player 1 is obtained by substituting the value function in the HJB equation.
From \eqref{FBNEeq:HJB:V1}, we have
\begin{align}
    -\frac{\partial \Phi_1(t,x) }{\partial t}=& \min_{u \in \Omega_{u}} \Big(\frac{1}{2}w_{1}(x-\rho_1)^2+\frac{1}{2}r_{1}u(t)^2+ \left(\frac{\partial \Phi_1}{\partial x}\right)(ax+bu(t)) \Big). \label{FBNEeq:HJB1:periodic}
\end{align}
Differentiating the right-hand side of the above equation and equating the
result to zero yields the equilibrium strategy of Player 1 (see Assumption \ref{assum:interior}):
\begin{align}
   \gamma^*(t,x)= u^*(t) =-\frac{b}{r_1}\left(\frac{\partial \Phi_1}{\partial x}\right)=-\frac{b}{r_1}(p_1(t)x+q_1(t)). \label{FBNEeq:u:periodic}
\end{align}
Substituting \eqref{FBNEeq:u:periodic} in the state dynamics \eqref{FBNEeq:LQDG:FBstateq}, we obtain  
 \begin{align}
 \dot{x}(t)&=ax(t)+bu^*(t)
 =ax(t)-\frac{b^2}{r_1}\left( p_1(t)x(t)+q_1(t)\right)\notag\\
 &=\left(a-\frac{b^2}{r_1} p_1(t)\right)x(t) -\frac{b^2}{r_1}q_1(t)\notag\\&=a_x(t)x(t) +b_x q_1(t), \label{FBNEeq:stateC:periodic}
 \end{align}
 where $a_x(t)=a-\frac{b^2}{r_1}p_1(t)$ and $b_x=-\frac{b^2}{r_1}$.
On substituting \eqref{FBNEeq:u:periodic} and \eqref{FBNEV1def_periodic} in \eqref{FBNEeq:HJB1:periodic}, we obtain
\begin{align*}
-\frac{1}{2} \dot{p}_1(t)x^2- \dot{q}_1(t) x-\dot{n}_1(t)&=
    \frac{1}{2}
w_{1}(x-\rho_1)^2-\frac{1}{2}
b_x(p_1(t)x+q_1(t))^2+\left(p_1(t)x+q_1(t)\right) (a_x(t)x+b_xq_1(t))\\
   \Rightarrow -\dot{p}_1(t)x^2- 2\dot{q}_1(t) x-2\dot{n}_1(t)&=
w_{1}x^2+w_1\rho_1^2-2xw_1\rho_1-
b_x\left(p_1(t)^2x^2-q_1(t)^2\right)+ 2a_x(t)\left(p_1(t)x+q_1(t)\right)x.
\end{align*}
Upon rearranging a few terms in the above equation, we get  
\begin{align*}
     &\left(\dot{p}_1(t)+w_1+b_x p_1(t)^2+2p_1(t)a\right)x^2+w_1\rho_1^2+2\dot{n}_1(t)+ b_x q_1(t)^2 +\left(2\dot{q}_1(t)+ 2a_x(t)q_1(t)-2w_1\rho_1\right)x=0.
\end{align*}
Since the above equation must hold for all $x$ except at $(t,x) \not\in \mathcal{C}$,   $p_1(\cdot )$, $q_1(\cdot )$, and $n_1(\cdot )$ evolve as follows: 
\begin{subequations}\label{FBNEeq:exo:P1:dyn:periodic}
\begin{align}
   & \dot{p}_1(t)=-w_1-b_x p_1(t)^2-2p_1(t)a, \label{FBNEeq:p1:dyn}\\
   & \dot{q}_1(t)=-a_x(t)q_1(t)+w_1\rho_1,\,\label{FBNEeq:q1:dyn}\\
  &  \dot{n}_1(t)=-\frac{1}{2}b_x q_1(t)^2-\frac{w_1\rho_1^2}{2},\, \label{FBNEeq:n1:dyn}
\end{align}
\end{subequations}
where $p_1(T)=s_1,\, q_1(T)=-s_1\rho_1$, and $n_1(T)=\frac{1}{2}s_1\rho_1^2$.

The solution of \eqref{FBNEeq:p1:dyn} is given by the following equation (see Appendix \ref{FBNEsolve31}):
\begin{align}\label{FBNEp1:closedform}
   p_1(t)= \frac{1}{b_x}\left(-a+\frac{\theta}{2}-\frac{\theta}{C_{1}e^{\theta t}+1 }\right).
\end{align}

\noindent
Using the value of $p_1(t)$ given in \eqref{FBNEp1:closedform}, we obtain
\begin{align*}
    a_x(t)= a+b_xp_1(t)= \frac{\theta}{2}-\frac{\theta}{C_{1}e^{\theta t}+1 }.
\end{align*}

\begin{proposition}
    Let Assumptions \ref{assum:continuationset:ilqdg}-\ref{assum:V1form} hold. Then, the equilibrium state-feedback strategy of Player 1 is given by
\begin{align}
    \gamma^*(t,x)=\left(\frac{\theta}{2}-\frac{\theta}{C_{1}e^{\theta t}+1} \right)x-\frac{b^2}{r_1}q_1(t),\, \forall (t,x) \in \mathcal{C},
\end{align}
where
\begin{align}\label{FBNEC1}
&\theta= 2\sqrt{a^2+w_1\frac{b^2}{r_1}},\\
 & C_{1}= \left(\frac{2\theta}{\theta+2\frac{b^2}{r_1}s_1-2a}-1\right)e^{-\theta T}.
\end{align}
\end{proposition}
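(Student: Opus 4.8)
The plan is to combine the HJB-based derivation already carried out earlier in this section with an explicit integration of the scalar Riccati equation \eqref{FBNEeq:p1:dyn}. Under Assumption \ref{assum:V1form} the value function of Player 1 restricted to $\mathcal{C}$ equals the quadratic $\Phi_1$ in \eqref{FBNEV1def_periodic}, and under Assumption \ref{assum:interior} the minimization in the HJB equation \eqref{FBNEeq:HJB1:periodic} is attained at an interior point, so the first-order condition produces the affine feedback \eqref{FBNEeq:u:periodic}. Plugging \eqref{FBNEeq:u:periodic} and \eqref{FBNEV1def_periodic} back into \eqref{FBNEeq:HJB1:periodic} and matching the coefficients of $x^{2}$, $x$, and $1$ — which is legitimate because, by Assumption \ref{assum:continuationset:ilqdg}, the resulting identity must hold on the interval $\ell_1(t)<x<\ell_2(t)$ of positive length — yields the decoupled system \eqref{FBNEeq:exo:P1:dyn:periodic} together with the terminal data $p_1(T)=s_1$, $q_1(T)=-s_1\rho_1$, $n_1(T)=\tfrac12 s_1\rho_1^{2}$ read off from the terminal cost. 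Consequently the proof of the proposition reduces to solving \eqref{FBNEeq:p1:dyn} in closed form and substituting the result into \eqref{FBNEeq:u:periodic}.

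To integrate \eqref{FBNEeq:p1:dyn}, $\dot p_1=-w_1-b_x p_1^{2}-2ap_1$ with $b_x=-b^{2}/r_1$, I would pass to the closed-loop drift $\phi(t):=a_x(t)=a+b_x p_1(t)$ defined in \eqref{FBNEeq:stateC:periodic}. Using the algebraic identity $b_x^{2}p_1^{2}+2ab_x p_1=\phi^{2}-a^{2}$, a one-line computation gives
\begin{align*}
\dot\phi = b_x\dot p_1 = -b_x w_1-(\phi^{2}-a^{2}) = \Big(a^{2}+w_1\tfrac{b^{2}}{r_1}\Big)-\phi^{2} = \frac{\theta^{2}}{4}-\phi^{2},
\end{align*}
with $\theta=2\sqrt{a^{2}+w_1 b^{2}/r_1}$ as in \eqref{FBNEC1}. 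This is a logistic-type equation with equilibria $\pm\theta/2$; separating variables and using the partial-fraction decomposition $\big[(\tfrac\theta2-\phi)(\tfrac\theta2+\phi)\big]^{-1}=\tfrac1\theta\big[(\tfrac\theta2-\phi)^{-1}+(\tfrac\theta2+\phi)^{-1}\big]$ shows that $\tfrac1\theta\ln\frac{\theta/2+\phi}{\theta/2-\phi}$ is affine in $t$, hence $\phi(t)=\tfrac\theta2-\frac{\theta}{C_1e^{\theta t}+1}$ for some constant $C_1$; inverting $\phi=a+b_x p_1$ recovers exactly the closed form \eqref{FBNEp1:closedform}.

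It then remains to fix $C_1$ and assemble the feedback. Imposing $p_1(T)=s_1$, i.e.\ $\phi(T)=a-\tfrac{b^{2}}{r_1}s_1$, in $\frac{\theta/2+\phi(T)}{\theta/2-\phi(T)}=C_1e^{\theta T}$ and simplifying gives $C_1=\big(\tfrac{2\theta}{\theta+2(b^{2}/r_1)s_1-2a}-1\big)e^{-\theta T}$, which is \eqref{FBNEC1}; substituting \eqref{FBNEp1:closedform} into \eqref{FBNEeq:u:periodic} (with $q_1(t)$ left as the solution of the linear ODE \eqref{FBNEeq:q1:dyn}) then produces the stated affine expression for $\gamma^{*}(t,x)$ on $\mathcal{C}$. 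The one point that genuinely needs care — and the main obstacle — is well-posedness of this closed form on all of $[0,T]$: one must check that $C_1e^{\theta t}+1$ (equivalently $\theta/2-\phi(t)$) does not vanish for $t\in[0,T]$, so that $p_1$ has no finite escape time. This follows from the standard global-existence theory for the Riccati equation of a scalar linear-quadratic problem with $w_1,s_1\ge 0$ and $r_1>0$, and can also be seen directly from the signs $\theta/2\pm a>0$ (since $\theta/2>|a|$) together with the value of $\phi(T)$. Everything else is routine sign bookkeeping and the verification that the integration constant coincides with \eqref{FBNEC1}.
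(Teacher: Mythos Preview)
Your proof is correct and follows the same overall architecture as the paper: derive the affine feedback \eqref{FBNEeq:u:periodic} from the HJB first-order condition, match coefficients to obtain the Riccati system \eqref{FBNEeq:exo:P1:dyn:periodic}, and then integrate \eqref{FBNEeq:p1:dyn} explicitly. The one genuine methodological difference is in that last step. The paper (Appendix \ref{FBNEsolve31}) linearizes the Riccati equation via the classical substitution $p_1(t)=\dot\mu(t)/(b_x\mu(t))$, obtaining the second-order linear ODE $\ddot\mu+2a\dot\mu+b_xw_1\mu=0$, solving it with the characteristic exponents $-a\pm\theta/2$, and then recovering $p_1$. You instead substitute the closed-loop drift $\phi=a_x=a+b_xp_1$, which collapses the Riccati equation directly to the separable first-order equation $\dot\phi=\theta^{2}/4-\phi^{2}$ and is integrated by partial fractions. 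Your route is shorter and has the bonus that it produces $a_x(t)=\theta/2-\theta/(C_1e^{\theta t}+1)$ immediately, which is precisely the coefficient appearing in the stated feedback; the paper's linearization is the textbook device and makes the connection to the general Riccati theory more transparent. Your added remark on well-posedness (no finite escape time for $p_1$ on $[0,T]$) is a point the paper leaves implicit, and your justification via the signs $\theta/2>|a|$ and the standard LQ global-existence argument is sound.
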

\noindent
When an impulse occurs, that is, $(\tau_i^{*},x(\tau_i^{*})) \in\mathcal{I}$, it follows from \eqref{FBNEeq:update:V1} that $V_1$ satisfies
\begin{align*}
\frac{1}{2}  p_1(\tau_i^{*-})x^2+q_1(\tau_i^{*-})x+n_1(\tau_i^{*-})&=\frac{1}{2}  p_1(\tau_i^{*+})(x+\xi_i^*)^2+q_1(\tau_i^{*+})(x+\xi_i^*)+n_1(\tau_i^{*+})+z_1\lvert \xi_i^* \rvert.
\end{align*}
The equilibrium strategy of Player 2 is to bring the state to $\alpha(t)$ if $x\leq \ell_1(t)$, and to $\beta(t)$  if $x\geq \ell_2(t)$, that is,  $x+\xi_i^*=\alpha(\tau_i^{*-})$ if  $x\leq \ell_1(t)$ and $x+\xi_i^*=\beta(\tau_i^{*-})$ if $x\geq \ell_2(t)$. Therefore, we have
\begin{align*}
    \frac{1}{2}  p_1(\tau_i^{*-})x^2+q_1(\tau_i^{*-})x+n_1(\tau_i^{*-})&=\frac{1}{2}  p_1(\tau_i^{*+})\alpha(\tau_i^{*-})^2+q_1(\tau_i^{*+})\alpha(\tau_i^{*-})+n_1(\tau_i^{*+})\\
 &\quad+z_1\lvert \alpha(\tau_i^{*-})-x \rvert,\, x\leq \ell_1(t),\\
 \frac{1}{2}  p_1(\tau_i^{*-})x^2+q_1(\tau_i^{*-})x+n_1(\tau_i^{*-})
 &=\frac{1}{2}  p_1(\tau_i^{*+})\beta(\tau_i^{*-})^2+q_1(\tau_i^{*+})\beta(\tau_i^{*-})+n_1(\tau_i^{*+})\\
 &\quad+z_1\lvert \beta(\tau_i^{*-})-x\rvert,\, x\geq \ell_2(t).
\end{align*}
Since $\xi_i^*=\alpha(\tau_i^{*-})-x>0$ and $\xi_i^*=\beta(\tau_i^{*-})-x<0$, we have
\begin{align*}
&\frac{1}{2}  p_1(\tau_i^{*-})x^2+q_1(\tau_i^{*-})x+n_1(\tau_i^{*-})=\frac{1}{2}  p_1(\tau_i^{*+})\alpha(\tau_i^{*-})^2+(z_1+q_1(\tau_i^{*+}))\alpha(\tau_i^{*-})+n_1(\tau_i^{*+})-z_1x,\, x\leq \ell_1(t),\\
& \frac{1}{2}  p_1(\tau_i^{*-})x^2+q_1(\tau_i^{*-})x+n_1(\tau_i^{*-})=\frac{1}{2}  p_1(\tau_i^{*+})\beta(\tau_i^{*-})^2+(-z_1+q_1(\tau_i^{*+}))\beta(\tau_i^{*-})+n_1(\tau_i^{*+})+z_1x,\, x\geq \ell_2(t).
\end{align*}
The above equations and continuity of $p_1$ and $q_1$ (from Assumption \ref{assum:feedback:Player1}) imply that, at the impulse instants, the following conditions are satisfied:
\begin{align*}
&n_1(\tau_i^{*-})=n_1(\tau_i^{*+}) +\frac{1}{2}p_1(\tau_i^{*}) \alpha(\tau_i^{*-})^2-\frac{1}{2}p_1(\tau_i^{*})x^2 +(z_1+q_1(\tau_i^{*}))\alpha(\tau_i^{*-})-(q_1(\tau_i^{*-})+z_1)x, x\leq \ell_1(t),\\
&n_1(\tau_i^{*-})=n_1(\tau_i^{*+}) +\frac{1}{2}p_1(\tau_i^{*}) \beta(\tau_i^{*-})^2-\frac{1}{2}p_1(\tau_i^{*})x^2 -(z_1-q_1(\tau_i^{*}))\beta(\tau_i^{*-})-(q_1(\tau_i^{*-})-z_1)x, x\geq \ell_2(t).
\end{align*}
\subsection{Impulse control problem of Player 2}
Player 2 solves the QVIs associated to her impulse control problem for a given equilibrium strategy $\gamma^*$ of Player $1$.

In the continuation region, Player 2's running cost is quadratic in the state, and it is  is linear in the state  in the intervention region. Therefore, we can make the following conjecture  on the form of the value function of Player 2:
\begin{assumption}\label{assum:formV2}
The value function of Player 2 is given by 
\begin{align}\label{FBNEstructure:V2}
    V_2(t,x)=\begin{cases}
    \Phi_2(t,\alpha(t))+C+c(\alpha(t)-x) &x \leq \ell_1(t)\\
  \Phi_2(t,x) & x\in (\ell_1(t),\ell_2(t))\\
     \Phi_2(t,\beta(t))+D+d(x-\beta(t)) & x \geq \ell_2(t),
     \end{cases}
    \end{align}
    where
    \begin{align}\label{FBNEV2_linear:quad}
    \Phi_2(t,x)=\frac{1}{2}p_2(t)x^2+q_2(t)x+n_2(t).
\end{align}
\end{assumption}  
A similar assumption on the form of the value function was made in \cite{Bertola2016} to obtain analytical solutions for an  impulse optimal control problem.

The value function $V_2$ coincides with continuous and continuously differentiable function $\Phi_2$  in the continuation region $\mathcal{C}$. We conjecture that $\Phi_2$ is quadratic in state  because  the cost functions are quadratic in state. In the intervention region, the value function is equal to the sum of the intervention cost incurred by the player to shift the state to the continuation region and the cost-to-go (which is equal to $\Phi_2(t,\alpha(t)$ or $\Phi_2(t,\beta(t)$ depending on the state value at the impulse time) from playing optimally afterwards.

\noindent    
    When the state lies in the continuation region, that is, $x\in (\ell_1(t), \ell_2(t))$, the value function of Player $2$ satisfies \eqref{FBNEQVI:HJB} with equality
\begin{align*}
    \frac{\partial \Phi_2(t,x) }{\partial t}+ \left(\frac{\partial \Phi_2}{\partial x}\right)(ax+b\gamma^*(t,x))+  \frac{1}{2}
w_{2}(x-\rho_2)^2=0.
\end{align*}

Substituting the partial derivatives of $\Phi_2(t,x)$ and the equilibrium control of Player $1$ from \eqref{FBNEeq:u:periodic} in the above equation yields
 \begin{align*}
&\frac{1}{2} \dot{p}_2(t) x^2+ \dot{q}_2(t) x+\dot{n}_2(t)+\left(p_2(t)x+q_2(t)\right)a_x(t)x-w_{2}x\rho_2\\&\qquad+b_xq_1(t)\left(p_2(t)x+q_2(t)\right)+\frac{1}{2}w_{2}x^2+\frac{1}{2}w_{2}\rho_2^2=0.
 \end{align*}
   On comparing the coefficients, we obtain
   \begin{subequations}
   \label{FBNEeq:exo:P2:dyn:periodic}
 \begin{align}
 \dot{p}_2(t)&=-w_2-2p_2(t)\left(\frac{\theta}{2}-\frac{\theta}{C_{1}e^{\theta t}+1 }\right),\; \label{FBNEeq:p2:dyn} \\
 \dot{q}_2(t)&=-\left(\frac{\theta}{2}-\frac{\theta}{C_{1}e^{\theta t}+1 }\right)q_2(t)-b_xp_2(t)q_1(t)+w
 _2\rho_2,\label{FBNEeq:q2:dyn}\\
 \dot{n}_2(t)&=-b_xq_1(t)q_2(t)-\frac{1}{2}w_2\rho_2^2,
 \end{align}
    \end{subequations}
   where $p_2(T)=s_2,\, q_2(T)=-s_2\rho_2$, and $n_2(T)=\frac{1}{2}s_2\rho_2^2$.
   
\noindent
The solution of \eqref{FBNEeq:p2:dyn} is given by
\begin{align} p_2(t)=\frac{-w_{2}{{e}}^{2t\theta }C_{1}^2-2t\theta w_{2}{{e}}^{t\theta }C_{1}+w_{2}+H\theta {{e}}^{t\theta }}{\theta {\left(C_{1}{{e}}^{t\theta }+1\right)}^2},
\end{align}
where 
\begin{align}
    H=&2C_{1}s_{2}+s_{2}{{e}}^{-T\theta }-\frac{w_{2}{{e}}^{-T\theta }-C_{1}^2w_{2}{{e}}^{T\theta }}{\theta }+C_{1}^2s_{2}{{e}}^{T\theta }+2C_{1}Tw_{2}.
\end{align}

\noindent
We make the following assumption on the problem parameters so that $p_2>0$ for all $t\in [0,T]$, and consequently, the value function of Player 2 is strictly convex in the continuation region $\mathcal{C}$.
\begin{assumption}\label{FBNEV2:convex}
For $t\in [0,T]$, the problem parameters satisfy
\begin{align}
 H\theta+ w_{2}(1-{\mathrm{e}}^{t\theta }C_{1}^2-2t\theta C_{1}) >0.  
\end{align}
\end{assumption}

\subsubsection{Intervention set and continuation set}

 In the intervention region ($(t,x)\in\mathcal{I}$), \eqref{FBNEQVI:v:greater:rv} holds with equality, that is, 
\begin{align}
    V_2(t,x)=\mathcal{R}V_2(t,x)=\min_{\eta\in \Omega_2} (V_2(t,x+\eta)+h(\eta)).
\end{align}
For the problem parameters assumed in this section, $V_2$ is  strictly convex in $x$ (see Assumption \ref{FBNEV2:convex}) and continuously differentiable for $y=x+\eta \in \mathcal{C}$. Since $\alpha(t),\beta(t) \in \mathcal{C}$, and $x+\eta$ takes a value of $\alpha(t)$ or $\beta(t)$ at the intervention instants and the derivative of $y$ with respect to $\eta$ is equal to 1, we can use the first-order conditions to obtain
\begin{align}
    \frac{\partial \Phi_2(t,\alpha(t))}{\partial y} +\frac{\partial h(\eta)}{\partial \eta}=0,\; x\leq \ell_1(t),\\
      \frac{\partial \Phi_2(t,\beta(t))}{\partial y} +\frac{\partial h(\eta)}{\partial \eta}=0,\; x \geq \ell_2(t).
\end{align}
Using the quadratic form of the value function in \eqref{FBNEV2_linear:quad} for the state value in the continuation region $(\ell_1(t), \ell_2(t))$, we get
\begin{eqnarray}\label{FBNEalpha2diffquad}
\begin{aligned}
  &&  \frac{\partial \Phi_2}{\partial y}(t,\alpha(t))=p_2(t)\alpha(t)+q_2(t)=-c,\\
  &&  \frac{\partial \Phi_2}{\partial y}(t,\beta(t))=p_2(t)\beta(t)+q_2(t)=d.
  \end{aligned}
\end{eqnarray}
\begin{subequations}
Therefore, the following functions $\alpha(\cdot)$ and $\beta(\cdot)$ give the state values after an impulse occurs at equilibrium:
\begin{align}
&\alpha(t)=-\frac{q_2(t)+c}{p_2(t)},\forall t \in [0,T],\label{FBNEeq:alpha}\\
&\beta(t)=\frac{d-q_2(t)}{p_2(t)},\, \forall t\in [0, T].\label{FBNEeq:beta}
\end{align}
\end{subequations}
Since \eqref{FBNEQVI:v:greater:rv} holds with equality in the intervention region, we have
\begin{eqnarray}
    V_2(t,x)=\begin{cases}V_2(t,\alpha(t))+C+c(\alpha(t)-x) &x \leq \ell_1(t)\\
    V_2(t,\beta(t))+D+d(x-\beta(t)) & x \geq \ell_2(t).
    \end{cases}
\end{eqnarray}
Also, $\alpha(t)$ and $\beta(t)$ lie in the continuation region $\mathcal{C}$, which implies $V_2(t,\alpha(t))=\Phi_2(t,\alpha(t))$ and $V_2(t,\beta(t))=\Phi_2(t,\beta(t))$. For $x=\ell_1(t)$ and $x=\ell_2(t)$, we substitute \eqref{FBNEV2_linear:quad} in the above equations and simplify to obtain
\begin{subequations}
\begin{align}
    &\frac{1}{2}p_2(t)\ell_1(t)^2+q_2(t)\ell_1(t)=\frac{1}{2}p_2(t)\alpha(t)^2+q_2(t)\alpha(t)+C+c(\alpha(t)-\ell_1(t)),\label{FBNEeq:V2:continuous:alpha}\\
      &\frac{1}{2}p_2(t)\ell_2(t)^2+q_2(t)\ell_2(t)=\frac{1}{2}p_2(t)\beta(t)^2+q_2(t)\beta(t) +D+d(\ell_2(t)-\beta(t)). \label{FBNEeq:V2:continuous:beta}
\end{align}
\end{subequations}
To characterize the left boundary of the continuation region, we substitute
 $\alpha(t)$ in \eqref{FBNEeq:V2:continuous:alpha}  to get
\begin{align*}
&p_2(t)\ell_1(t)^2+2(q_2(t)+c)\ell_1(t)-p_2(t)\left(-\frac{q_2(t)+c}{p_2(t)}\right)^2-2(q_2(t)+c)\left(-\frac{q_2(t)+c}{p_2(t)}\right)-2C=0 \\
    &\Rightarrow p_2(t)\ell_1(t)^2+2(q_2(t)+c)\ell_1(t)+\frac{(q_2(t)+c)^2}{p_2(t)}-2C=0. 
\end{align*}
Since $C>0$, $p_2(t)>0$, and $\ell_1(t)<\alpha(t)$, the left boundary of the continuation region is given by
\begin{subequations}
\begin{align}
\ell_1(t)=\frac{-c-q_2(t) - \sqrt{2Cp_2(t)}}{p_2(t)}.\label{FBNEeq:ell1}
\end{align}
On substituting $\beta(t)$ in \eqref{FBNEeq:V2:continuous:beta}, we obtain the right boundary of the continuation region
\begin{align*}
&p_2(t)\ell_2(t)^2+2(q_2(t)-d)\ell_2(t)-p_2(t)\left(\frac{d-q_2(t)}{p_2(t)}\right)^2-2(q_2(t)-d)\left(\frac{d-q_2(t)}{p_2(t)}\right)-2D=0\\
    &\Rightarrow p_2(t)\ell_2(t)^2+2(q_2(t)-d)\ell_2(t)+\frac{(d-q_2(t))^2}{p_2(t)}-2D=0.
\end{align*}
From  $D>0, p_2(t)>0$ and $\ell_2(t)>\beta(t)$, we obtain
\begin{align}
    \ell_2(t)=\frac{-q_2(t)+d + \sqrt{2Dp_2(t)}}{p_2(t)}.\label{FBNEeq:ell2}
\end{align}
\end{subequations}

% \begin{remark}
%     In \cite{Bertola2016}, the authors analytically characterized $\alpha(t),\, \beta(t),\,\newline \ell_1(t),\, \ell_2(t)$ for their impulse optimal control problem. However, in iLQDG, these functions are coupled with Player  1's problem.  As a result, we obtain a semi-analytic characterization of these variables in terms of the problem parameters.
% \end{remark}
 By construction, $V_1(t,x)$ satisfies the sufficient conditions in \eqref{HJB_all}, and therefore, $V_1$ is a value function of Player 1.   In the next theorem, we give conditions under which $V_2(t,x)$ in \eqref{FBNEstructure:V2} satisfies the QVIs \eqref{eq:QVI_all}.
\begin{theorem}\label{theorem:verif:lqdg}
Let Assumptions \ref{assum:continuationset:ilqdg}-\ref{FBNEV2:convex} hold. $V_2(t,x)$ in \eqref{FBNEstructure:V2} is the value function of Player 2 if $\ell_1(t)\leq x_{11}(t)$ and $\ell_2(t)\geq x_{22}(t)$ for each $t\in[0,T]$ where $\ell_1(t)$ and $\ell_2(t)$ are given in \eqref{FBNEeq:ell1} and \eqref{FBNEeq:ell2}, respectively, 
   \begin{subequations}
   \begin{align}
   &x_{11}(t)=\frac{(ca+w_2\rho_2)-\sqrt{\theta_\alpha(t)} }{w_2},\\
  & x_{22}(t)=\frac{-(da-w_2\rho_2)+\sqrt{\theta_\beta(t)} }{w_2},\\
   & \theta_\alpha(t)=c^2a^2+2w_2\left(ca\rho_2-\frac{\partial \Phi_2(t,\alpha(t))}{\partial t}\right),\label{FBNEeq:delalpha}\\
  & \theta_\beta(t)= d^2a^2-2w_2\left(da\rho_2-\frac{\partial \Phi_2(t,\beta(t))}{\partial t}\right),\label{FBNEeq:delbeta}
   \end{align}
   \end{subequations}
   and $x_{11}(t)$ and $x_{22}(t)$ are well defined with $\theta_{\alpha}(t)\geq0$ and $\theta_{\beta}(t)\geq 0$ for all $t\in [0,T]$.
\end{theorem}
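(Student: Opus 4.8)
The plan is to appeal to the verification theorem (Theorem~\ref{FBNEthm:verif}): since $V_1$ in \eqref{FBNEstructure:V1} has already been constructed to satisfy the HJB system \eqref{HJB_all}, it suffices to show that $V_2$ in \eqref{FBNEstructure:V2} solves the weak QVI system \eqref{eq:QVI_all} for all $(t,x)\in\Sigma$; Theorem~\ref{FBNEthm:verif} then identifies $V_2$ with Player~2's equilibrium cost-to-go, i.e. her value function. Several pieces of \eqref{eq:QVI_all} are immediate from the construction. The terminal condition \eqref{FBNEQVI:terminal} holds because $p_2(T)=s_2$, $q_2(T)=-s_2\rho_2$, $n_2(T)=\tfrac12 s_2\rho_2^2$. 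On the continuation set $\mathcal{C}=\{\ell_1(t)<x<\ell_2(t)\}$ we have $V_2=\Phi_2$, and \eqref{FBNEQVI:HJB} holds there with equality precisely because $(p_2,q_2,n_2)$ were defined by matching coefficients in that equation, i.e. by the ODEs \eqref{FBNEeq:exo:P2:dyn:periodic}. On the intervention set $\mathcal{I}$ the relation $V_2=\mathcal{R}V_2$ holds by construction: $\alpha(\cdot),\beta(\cdot)$ in \eqref{FBNEeq:alpha}--\eqref{FBNEeq:beta} come from the first-order conditions \eqref{FBNEalpha2diffquad} for the intervention operator, and $\ell_1(\cdot),\ell_2(\cdot)$ in \eqref{FBNEeq:ell1}--\eqref{FBNEeq:ell2} were chosen from \eqref{FBNEeq:V2:continuous:alpha}--\eqref{FBNEeq:V2:continuous:beta} so that $V_2$ is continuous across the boundary and equals its own $\mathcal{R}$-image there. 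Consequently the complementarity relation \eqref{FBNEQVI:complementarity} holds everywhere: on $\mathcal{C}$ the second factor vanishes, on $\mathcal{I}$ the first.

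Two substantive checks remain. The first is \eqref{FBNEQVI:v:greater:rv}, $V_2(t,x)\le\mathcal{R}V_2(t,x)$ on all of $\Sigma$: on $\mathcal{I}$ it is the equality just noted, while on $\mathcal{C}$ we must show it is never strictly advantageous to intervene. Here one uses that $\Phi_2(t,\cdot)$ is strictly convex (Assumption~\ref{FBNEV2:convex} gives $p_2(t)>0$), so that on each branch of the piecewise-linear cost $h$ the map $\eta\mapsto V_2(t,x+\eta)+h(\eta)$ is strictly convex and, by \eqref{FBNEalpha2diffquad}, is minimized over upward (resp. downward) impulses exactly when the post-jump state is $\alpha(t)$ (resp. $\beta(t)$), while $\eta=0$ costs $\min(C,D)>0$. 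The resulting ``savings'' functions, e.g. $x\mapsto\big(\Phi_2(t,\alpha(t))+C+c(\alpha(t)-x)\big)-\Phi_2(t,x)$ and its $\beta$-analogue, are concave, vanish at $\ell_1(t)$ (resp. $\ell_2(t)$) by construction, and are stationary at $\alpha(t)$ (resp. $\beta(t)$), hence stay positive on the relevant part of $(\ell_1(t),\ell_2(t))$; combining the two branches with the $\eta=0$ option and the ordering $\ell_1(t)<\alpha(t)<\beta(t)<\ell_2(t)$ from Assumption~\ref{assum:continuationset:ilqdg} yields $V_2(t,x)=\Phi_2(t,x)<\mathcal{R}V_2(t,x)$ on $\mathcal{C}$.

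The second check, and the source of the theorem's explicit conditions, is the HJB-like inequality \eqref{FBNEQVI:HJB} on $\mathcal{I}$. On $\{x\le\ell_1(t)\}$, $V_2(t,x)=\Phi_2(t,\alpha(t))+C+c(\alpha(t)-x)$ is affine in $x$, so $\partial V_2/\partial x=-c$; differentiating in $t$ and using $\partial\Phi_2(t,\alpha(t))/\partial x=-c$ from \eqref{FBNEalpha2diffquad}, the $\dot\alpha(t)$ terms cancel, so $\partial V_2/\partial t=\partial\Phi_2(t,\alpha(t))/\partial t$. Substituting these into \eqref{FBNEQVI:HJB} with running cost $\tfrac12 w_2(x-\rho_2)^2$ and the state dynamics \eqref{FBNEeq:LQDG:FBstateq} reduces it to a scalar quadratic inequality in $x$ whose leading coefficient $\tfrac12 w_2$ is positive and whose discriminant is exactly $\theta_\alpha(t)$ of \eqref{FBNEeq:delalpha}, with smaller root $x_{11}(t)$. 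Since an upward parabola is nonnegative outside the open interval between its roots, the inequality holds for every $x\le\ell_1(t)$ iff $\ell_1(t)\le x_{11}(t)$ (when $\theta_\alpha(t)<0$ the parabola is strictly positive and the condition is vacuous; the standing hypothesis $\theta_\alpha(t)\ge0$ is what makes $x_{11}(t)$ well defined). A fully symmetric computation on $\{x\ge\ell_2(t)\}$ — now $\partial V_2/\partial x=d$, $\partial\Phi_2(t,\beta(t))/\partial x=d$, $\partial V_2/\partial t=\partial\Phi_2(t,\beta(t))/\partial t$ — gives the analogous quadratic with discriminant $\theta_\beta(t)$ of \eqref{FBNEeq:delbeta} and larger root $x_{22}(t)$, so \eqref{FBNEQVI:HJB} holds there iff $\ell_2(t)\ge x_{22}(t)$. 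With every component of \eqref{eq:QVI_all} verified, Theorem~\ref{FBNEthm:verif} yields the claim.

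I expect the real obstacle to be the interior part of \eqref{FBNEQVI:v:greater:rv}: certifying $\mathcal{R}V_2>V_2$ on all of $\mathcal{C}$ rather than just near its boundary. This rests entirely on the convexity Assumption~\ref{FBNEV2:convex} and on the internal consistency of the continuation-interval geometry posited in Assumption~\ref{assum:continuationset:ilqdg}; one must also exclude impulses that would place the post-jump state back inside $\mathcal{I}$ and handle the kink of $h$ at $\eta=0$. The other delicate point is purely formal: $V_2$ is only differentiable away from $x=\ell_1(t),\ell_2(t)$, which is precisely why \eqref{eq:QVI_all} is stated as a weak QVI requiring \eqref{FBNEQVI:HJB} only almost everywhere, so the kinks at the boundaries cause no difficulty. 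By contrast, once these points are granted, the derivation of the explicit conditions $\ell_1(t)\le x_{11}(t)$ and $\ell_2(t)\ge x_{22}(t)$ is a routine one-dimensional root computation.
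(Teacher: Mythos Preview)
Your proposal is correct and follows essentially the same route as the paper's proof: compute $\mathcal{R}V_2$ from the convexity of $\Phi_2(t,\cdot)$ and the first-order conditions \eqref{FBNEalpha2diffquad} to verify \eqref{FBNEQVI:v:greater:rv}, and then reduce \eqref{FBNEQVI:HJB} on each branch of $\mathcal{I}$ to an upward-opening quadratic in $x$ with the stated discriminants and roots. One point you should make explicit: in the intervention region the paper evaluates the Hamiltonian with drift $ax+\mathbbm{1}_{\ell_1(t)<x<\ell_2(t)}\,b\gamma^*(t,x)$, i.e.\ Player~1's feedback (defined only on $\mathcal{C}$ per Assumption~\ref{assum:feedback:Player1}) does not enter on $\mathcal{I}$; this is precisely why the linear term is $-cax$ (resp.\ $dax$) and the discriminant comes out as $\theta_\alpha$ (resp.\ $\theta_\beta$) with the bare $a$ rather than $a_x(t)$---your claim that the discriminant is ``exactly $\theta_\alpha(t)$'' already uses this, so state it rather than citing \eqref{FBNEeq:LQDG:FBstateq} alone.
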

\begin{proof}
See Appendix \ref{proof:theorem:verif:lqdg}.
\end{proof}
\section{Numerical examples} \label{FBNEsec:numerical}
To illustrate our results, we consider an iLQDG with time horizon $T=1$ and other problem parameters given in Table \ref{FBNEtab:Parameters}. 

\begin{table}[h]
    \centering
    \begin{tabular}{ccccccccccccccc}
    \toprule
      $a$&$b$&$w_1$&$s_1$&$r_1$&$z_1$&$w_2$& $s_2$&$c$&$C$&$D$&$d$&$\rho_1$ &$\rho_2$\\
     \midrule
$0.1$&$-0.3$&$1$&$1$&$1$&$2$&$4$&$1$&$2$&$3$&$5$&$3$&$2.5$&$5$\\
     \bottomrule
    \end{tabular}
    \caption{Parameters for numerical example }
    \label{FBNEtab:Parameters}
\end{table}

In Figure \ref{fig:s2}, we provide a complete characterization of the state feedback policy of Player 2 for the problem parameters in  Table \ref{FBNEtab:Parameters}. Player 2 gives an impulse at any time $t$ if the state reaches a level $\ell_1(t)$ or lower and brings the state to $\alpha(t)$. If the state reaches a level $\ell_2(t)$ or higher, then Player 2 gives an impulse to bring the state to $\beta(t)$. Since the cost coefficient $s_2$ of the salvage value for Player 2 is lower than the running cost coefficient $w_2$, the functions  $\ell_2(t)$, $\beta_2(t)$, $\ell_1(t)$, and $\alpha(t)$ diverge over time away from the target state value $\rho_2=5$. Also, the fixed cost and the marginal cost of intervention are small if the state crosses the lower boundary compared to the case when the state crosses the upper boundary. As a result, $\lvert \ell_1(t)- \alpha(t)\rvert < \lvert \ell_2(t)- \beta(t)\rvert$ for all $t\in [0,T]$. For  initial state values of $2$, $5$, and $8$, the evolution of equilibrium state trajectories is given by $x_1^*(t)$, $x_2^*(t)$, and $x_3^*(t)$, respectively; see Figure \ref{fig:s2}. The equilibrium strategies are strongly time consistent which implies that if the state deviates from the equilibrium path such that the state value $x(t)$ is  below $\ell_1(t)$ or  above $\ell_2(t)$ at any $t\in (0,T)$, Player 2 brings the state to $\alpha(t)$ or $\beta(t)$, respectively; this observation is illustrated in  Figure \ref{fig:s2}. 
 
 \begin{figure}[htbp]
    \centering
 \begin{tikzpicture}
    \begin{axis}[ xlabel=$t$,xmin=0,xmax=1,ymin=0,legend style={overlay},  legend columns=4,
		legend pos=north west,scale=1]
		\addplot[dashed,gray!30, ultra thick] file {Data/alpha_nofix_s2.dat};
			\addplot[  dashed,gray!100,ultra thick]   file {Data/beta_nofix_s2.dat};\addplot[dash dot,black!50, ultra thick]  file {Data/ell1_nofix_s2.dat};
	\addplot[dash dot,black, ultra thick]  file {Data/ell2_nofix_s2.dat};
		\addplot[dotted,green, thick]  file {Data/x_2.dat};
	\addplot[dotted,blue,  thick]  file {Data/x_5.dat};
		\addplot[dotted,magenta, thick]  file {Data/x_8.dat};
		\addlegendentry{$\alpha(t)$};
		\addlegendentry{$\beta(t)$};
		\addlegendentry{$\ell_1(t)$};
		\addlegendentry{$\ell_2(t)$};
		\addlegendentry{$x_1^*(t)$};
		\addlegendentry{$x_2^*(t)$};
		\addlegendentry{$x_3^*(t)$};
		\addplot[mark=*,color=green,mark size=1pt] coordinates {(0,2)};
		\addplot[mark=*,color=blue, mark size=1pt] coordinates {(0,5)};
		\addplot[mark=*,color=magenta, mark size=1pt] coordinates {(0,8)};
		%	\addlegendentry{$x(t)$};
	\end{axis}
 \end{tikzpicture}
    \caption{Evolution of the intervention region for the parameters in Table \ref{FBNEtab:Parameters}.}
    \label{fig:s2}
\end{figure}

 In Figure \ref{fig:valuefunctions2}, we can see that the value functions of Player 1 and Player 2 at the initial time are quadratic in state when the state is in $(\ell_1(0),\ell_2(0))$, and that, outside this region, the value functions are linear in  state. The value function of Player 1 jumps at $\ell_1(0)$ and $\ell_2(0)$ whereas Player 2's value function is continuous for all initial state values.
 
\begin{figure}[htbp]
    \centering
 \begin{tikzpicture}
    \begin{axis}[ xlabel=$x_0$ ,xmin=0,xmax=10.0305,ymin=-1,ymax=30,legend style={overlay},  legend columns=2,
		legend pos=north west,scale=1]
		\addplot[black,  thick] file {Data/V11_s2.dat};
			\addplot[gray!60,  thick]   file {Data/V21_s2.dat};
			\addplot[dash dot,ultra thick, samples=50, smooth,black!50] coordinates {( 3.3822,-1)( 3.3822,20)};
\addplot[dashed,ultra thick, samples=50, smooth,gray!30] coordinates {( 4.5111,-1)( 4.5111,20)};
\addplot[dashed,ultra thick, samples=50, smooth,gray!100] coordinates {(  5.5731,-1)(5.5731,20)};
\addplot[dash dot,ultra thick, samples=50, smooth,black] coordinates {(   7.0305,-1)( 7.0305,20)};
\addlegendentry{$V_1(0)$};
		\addlegendentry{$V_2(0)$}
			\addlegendentry{$\ell_1(0)$};	\addlegendentry{$\alpha(0)$};
		\addlegendentry{$\beta(0)$};
		\addlegendentry{$\ell_2(0)$};
\addplot[black,  thick] file {Data/V12_s2.dat};
			\addplot[gray!60,  thick]   file {Data/V22_s2.dat};
			\addplot[black,  thick] file {Data/V13_s2.dat};
			\addplot[gray!60,  thick]   file {Data/V23_s2.dat};
	\end{axis}
 \end{tikzpicture}
    \caption{Value function for the parameters in Table \ref{FBNEtab:Parameters}.}
    \label{fig:valuefunctions2}
\end{figure}
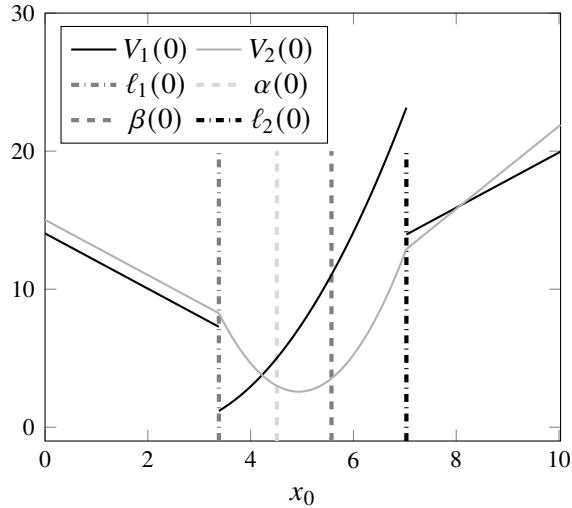

Next, we consider the case where the penalty associated with the state deviating from the target value at the terminal time is the same as the running cost. Therefore, in Figure \ref{fig:s2w2equal}, we can see that $\ell_1(\cdot)$, $\alpha(\cdot)$, $\beta(\cdot)$, and $\ell_2(\cdot)$ are a further away from the target state of Player 2 near the initial time, as compared to Figure \ref{fig:s2}. Here, $x_1^*(t), \, x_2^*(t),$ and $x_3^*(t)$ denote the equilibrium evolution of the state trajectory for initial state values of $1$, $6$, and $10$, respectively. The  value functions of Player 1 and Player 2 at the initial time are given in Figure \ref{fig:valuefunction} for different values of the initial state.
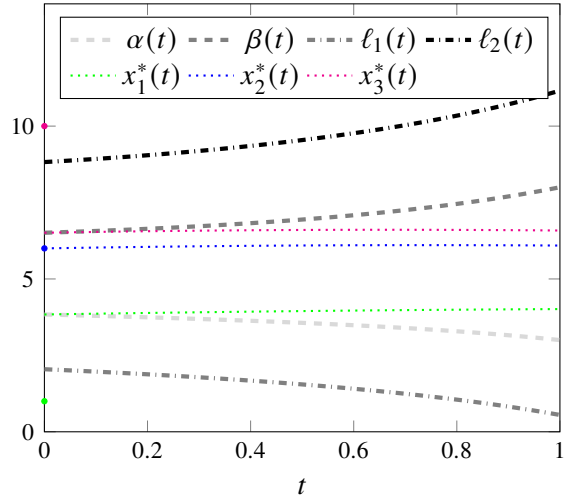
\begin{figure}[htbp]
    \centering
 \begin{tikzpicture}
    \begin{axis}[ xlabel=$t$,xmin=0,xmax=1,ymin=0,ymax=14,legend style={overlay},  legend columns=4,
		legend pos=north west,scale=1]
		\addplot[dashed,gray!30, ultra thick] file {Data/alpha_nofix.dat};
			\addplot[ dashed,gray!100, ultra thick]   file {Data/beta_nofix.dat};\addplot[dash dot,black!50, ultra thick]  file {Data/ell1_nofix.dat};
	\addplot[dash dot,black, ultra thick]  file {Data/ell2_nofix.dat};
	\addplot[dotted,green, thick]  file {Data/xw1_1.dat};
	\addplot[dotted,blue,  thick]  file {Data/xw1_6.dat};
		\addplot[dotted,magenta, thick]  file {Data/xw1_10.dat};
			\addlegendentry{$\alpha(t)$};
		\addlegendentry{$\beta(t)$};
		\addlegendentry{$\ell_1(t)$};
		\addlegendentry{$\ell_2(t)$};	\addlegendentry{$x_1^*(t)$};			\addlegendentry{$x_2^*(t)$};
								\addlegendentry{$x_3^*(t)$};
		%	\addlegendentry{$x(t)$};
		\addplot[mark=*,color=green,mark size=1pt] coordinates {(0,1)};
		\addplot[mark=*,color=blue, mark size=1pt] coordinates {(0,6)};
		\addplot[mark=*,color=magenta, mark size=1pt] coordinates {(0,10)};
	\end{axis}
 \end{tikzpicture}
    \caption{Evolution of the intervention region for the parameters in Table \ref{FBNEtab:Parameters} with $w_2=1$.}
    \label{fig:s2w2equal}
\end{figure}

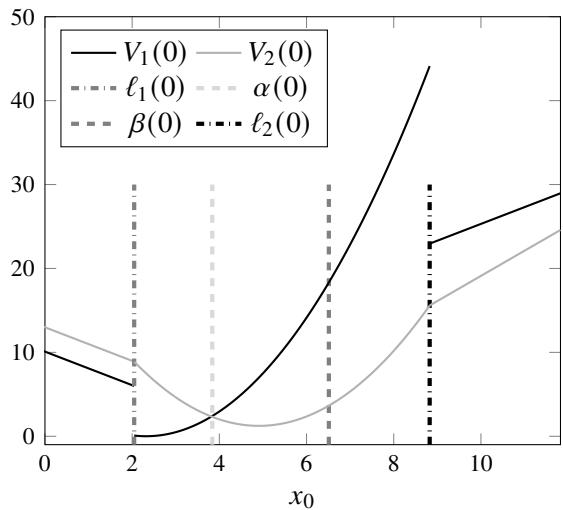
\begin{figure}[htbp]
    \centering
 \begin{tikzpicture}
    \begin{axis}[ xlabel=$x_0$ ,xmin=0,xmax=11.8240,ymin=-1,ymax=50,legend style={overlay},  legend columns=2,
		legend pos=north west,scale=1]
		\addplot[black, thick] file {Data/V11.dat};
			\addplot[gray!60, thick]   file {Data/V21.dat};
			\addplot[dash dot,ultra thick, samples=50, smooth,black!50] coordinates {( 2.0468,-1)( 2.0468,30)};
\addplot[dashed,ultra thick, samples=50, smooth,gray!30] coordinates {( 3.8380,-1)( 3.8380,30)};
\addplot[dashed,ultra thick, samples=50, smooth,gray!100] coordinates {(6.5116,-1)(6.5116,30)};
\addplot[dash dot,ultra thick, samples=50, smooth,black] coordinates {(8.8240,-1)(8.8240,30)};
\addlegendentry{$V_1(0)$};
		\addlegendentry{$V_2(0)$}
			\addlegendentry{$\ell_1(0)$};	\addlegendentry{$\alpha(0)$};
		\addlegendentry{$\beta(0)$};
		\addlegendentry{$\ell_2(0)$};
		\addplot[black, thick] file {Data/V12.dat};
			\addplot[gray!60, thick]   file {Data/V22.dat};
			\addplot[black, thick] file {Data/V13.dat};
			\addplot[gray!60, thick]   file {Data/V23.dat};
	\end{axis}
 \end{tikzpicture}
    \caption{Value function for the parameters in Table \ref{FBNEtab:Parameters} with $w_2=1$.}
    \label{fig:valuefunction}
\end{figure}
%%%%%------
%%%---
%%%%%---
\section{Conclusions}\label{FBNEsec:conclusion}
In this paper, we  considered a two-player finite-horizon nonzero-sum differential game where Player 1 uses piecewise-continuous controls and Player 2 uses impulse controls. We determined an upper bound on the equilibrium number of impulses  and provided sufficient conditions to characterize the feedback Nash equilibria for this general class of differential games with impulse controls. The sufficient conditions are given as a coupled system of Hamilton-Jacobi-Bellman equations with jumps and quasi-variational inequalities. To the best of our knowledge, this is the first characterization of feedback Nash equilibrium in differential games with impulse controls where  at least one player uses piecewise-continuous controls. In this, our paper also differs from earlier papers on impulse games where equilibrium solutions were derived for problems in which both players use impulse controls only. Furthermore, we extended a well-studied linear-quadratic impulse control problem to a game setting where both players use their controls to minimize  the cost associated with the state deviating from their target values.  

We obtained closed-form solutions for the feedback Nash equilibrium in the scalar linear-quadratic differential game
based on certain regularity assumptions on the value function that have been
assumed in the literature  (see e.g., \cite{Bertola2016} and \cite{Aid2020}). In 
future work, we plan to relax these assumptions and develop policy iteration-type
algorithms  \cite{Bokanowski2009} that can
solve the quasi-variational inequalities for the impulse player in the general class of differential games with impulse control.

	\appendix
\section{Appendix}
\subsection{Proof of Proposition \ref{FBNEprop_bound_number}}\label{FBNEproof:prop:number:upper}

A feasible strategy of Player $2$ is not to give any impulse in $[0,T]$ so that \begin{align}
\sum_{j\geq 1}\mathbbm{1}_{t\leq\tau_j\leq T}~b_2(x(\tau_j),\xi_j)=0,
\end{align} and it follows from the boundedness of $h_2$ and $s_2$ in Assumption \ref{FBNEassum:lipschitz} that
\begin{align*}
V_2(t,x)\leq & \int_{t}^{T}h_2(x(s),\gamma^*(s,x(s)))ds +s_{2}(x(T))\leq \|h_2\|_{\infty}(T-t)+\|s_2\|_{\infty}.
\end{align*}
Next, for any $\epsilon>0$, we choose a strategy ${\delta}_{[t,T]}\in \Delta_{[t,T]}$ so that 
\begin{align*}
    V_2(t,x)+\epsilon & >J_2(x,\gamma^*_{[t,T]},{\delta}_{[t,T]})\geq -\|h_2\|_{\infty}(T-t)-\|s_2\|_{\infty},
\end{align*}
where the second inequality follows from Assumption \ref{FBNEassum:lipschitz}. 
This proves that the value function is bounded such that
\begin{align}
\lvert V_2(t,x) \rvert \leq \|h_2\|_{\infty}(T-t)+\|s_2\|_{\infty},\; \forall (t,x)\in [0,T]\times\mathbb{R}^n. \label{FBNEeq:lem:bounded:value}
\end{align}

For any $\epsilon>0$, consider an $\epsilon$-optimal strategy ${v}^\epsilon$ with $N({v}^\epsilon)$ impulses. From the boundedness of  $h_2$, we obtain
\begin{align*}
    V_2(t,x)+\epsilon &> J_2(x,\gamma_1,{v}^\epsilon) \geq -\|h_2\|_{\infty}(T-t)+\mu N({v}^\epsilon)-\|s_2\|_{\infty}.
\end{align*}
Using the above relation and  \eqref{FBNEeq:lem:bounded:value}, we obtain
\begin{align*}
   &-\|h_2\|_{\infty}(T-t)+\mu N({v}^\epsilon)-\|s_2\|_{\infty}<\|h_2\|_{\infty}(T-t)+\|s_2\|_{\infty} +\epsilon.
\end{align*}
Since $\mu>0$, we can rewrite the above inequality as follows:
\begin{align*}
   N({v}^\epsilon)< \frac{2\left(\|h_2\|_{\infty}(T-t)+\|s_2\|_{\infty}\right)+\epsilon}{\mu}.
\end{align*}
Since $\epsilon>0$ is arbitrarily chosen for an $\epsilon$-optimal strategy of Player 2, the upper bound $K$ on the number of impulses is given by \eqref{eq:bound:number:impulse} as $\epsilon \to 0$.

\noindent
For a feasible strategy of Player 1 given by $\gamma(t,x)=0$ for all $(t,x)\in \Sigma$ and the upper bound $K$ on the number of impulses, we have 
\begin{align*}
V_1(t,x)&\leq \int_{t}^{T}h_1(x(s),0)ds +\sum_{j\geq1} \mathbbm{1}_{t\leq \tau_j<T}~b_1(x(\tau_i),\xi_j)+s_{1}(x(T))\\ &\quad\leq  \int_{t}^{T}h_1(x(s),0)ds+K\|b_1\|_{\infty}+s_{1}(x(T))\\&\qquad\leq \|h_1\|_{\infty}(T-t)+K\|b_1\|_{\infty}+\|s_1\|_{\infty},
\end{align*}
where the last inequality follows from the boundedness of $b_1$ and $s_1$ in Assumption \ref{FBNEassum:lipschitz}.
For any $\epsilon>0$, we take a strategy ${\gamma}_{[t,T]}\in \Gamma_{[t,T]}$ so that 
\begin{align*}
 &   V_1(t,x)+\epsilon>J_1(x,{\gamma}_{[t,T]},\delta^*_{[t,T]})\geq -\|h_1\|_{\infty}(T-t)-K\|b_1\|_{\infty}-\|s_1\|_{\infty}.
\end{align*}
This proves that the value function of Player 1 is bounded.  

% For $p_1(\tau_i)=0,\, i<k$, we obtain
% \begin{align}
%     C_{1,i}=\left(\frac{2\theta}{\theta-2a}-1\right)e^{-\theta \tau_i}.
% \end{align}
\subsection{Proof of Theorem \ref{theorem:verif:lqdg}}\label{proof:theorem:verif:lqdg}
 From \eqref{FBNEalpha2diffquad}, we have $\frac{\partial V_2(t,\alpha(t))}{\partial x}=-c$ and $\frac{\partial V_2(t,\beta(t))}{\partial x}=d$. Using the strict convexity of $V_2$ in $x$ for $(t,x)\in \mathcal{C}$ (Assumption \ref{FBNEV2:convex}), we obtain $$-c<\frac{\partial V_2(t,x)}{\partial x}<d, \;\forall (t,x): x\in (\alpha(t),\beta(t)).$$
 Therefore, $\mathcal{R}V_2(t,x)=\Phi_2(t,x)+\min(C,D)$ when the time and state pairs $(t,x)$ are such that $x\in (\alpha(t),\beta(t))$. \\

\noindent
When $x\in (\ell_1(t),\alpha(t))$, we have $\frac{\partial V_2(t,x)}{\partial x}\leq-c$ and,  for $x\in (\beta(t),\ell_2(t))$, we obtain $\frac{\partial V_2(t,x)}{\partial x}\geq d$ from the strict convexity of $V_2(t,x)$ in $x\in (\ell_1(t),\ell_2(t))$. Therefore, the operator $\mathcal{R}$ satisfies the following system:
\begin{align}
    \R V_2(t,x)=\begin{cases}
    \Phi_2(t,\alpha(t))+C+c(\alpha(t)-x) &x \leq \alpha(t)\\
  \Phi_2(t,x)+\min(C,D) & x\in (\alpha(t),\beta(t))\\
     \Phi_2(t,\beta(t))+D+d(x-\beta(t)) & x \geq \beta(t).
     \end{cases}
\end{align}
Clearly, $V_2-\mathcal{R}V_2<0$ in the continuation region and $V_2(t,x)=\mathcal{R}V_2(t,x)$ in the intervention region.

\noindent
Next, we derive the conditions under which the value function of Player 2 satisfies \eqref{FBNEQVI:HJB}.
For $x<\ell_1(t)$, we have 
\begin{align}
    V_2(t,x)= \Phi_2(t,\alpha(t))+C+c(\alpha(t)-x).
\end{align}
When $x < \ell_1(t)$, we obtain
\begin{align*}
    &\frac{\partial V_2(t,x)}{\partial t}+\mathcal{H}_2(x,\gamma^*(t,x), \frac{\partial V_2(t,x)}{\partial x})\\&\quad=\frac{\partial V_2(t,x)}{\partial t}+\frac{1}{2}w_2(x-\rho_2)^2+\frac{\partial V_2(t,x)}{\partial x}(ax+\mathbbm{1}_{\ell_1(t)<x<\ell_2(t)}~b\gamma^*(t,x))\\&\qquad=\frac{\partial \Phi_2(t,\alpha(t))}{\partial t}+\left(\frac{\partial \Phi_2(t,\alpha(t))}{\partial x}+c\right)\frac{d\alpha(t)}{dt}-cax+\frac{1}{2}w_2x^2+\frac{1}{2}w_2\rho_2^2-w_2x\rho_2.
\end{align*}
\noindent
Substituting \eqref{FBNEalpha2diffquad} in the above equation, we get the roots of the above equation as follows:
\begin{align}
    &x_{11}(t),x_{12}(t)=\frac{(ca+w_2\rho_2)\pm\sqrt{\theta_\alpha(t)} }{w_2},
\end{align}
where $x_{11}(t)<x_{12}(t)$, and $\theta_{\alpha}(t)$ is given by equation \eqref{FBNEeq:delalpha}.  
Therefore, \eqref{FBNEQVI:HJB} holds if  $\ell_1(t)\leq x_{11}(t)$ and $\theta_{\alpha}(t)\geq 0$ for all $t\in [0,T]$.

\noindent
For $x>\ell_2(t)$, we obtain
\begin{align*}
    &\frac{\partial V_2(t,x)}{\partial t}+\mathcal{H}_2(x,\gamma^*(t,x), \frac{\partial V_2(t,x)}{\partial x})\\&\quad=\frac{\partial V_2(t,x)}{\partial t}+\frac{1}{2}w_2(x-\rho_2)^2+\frac{\partial V_2(t,x)}{\partial t}(ax+\mathbbm{1}_{\ell_1(t)<x<\ell_2(t)}b\gamma^*(t,x))\\&\qquad=\frac{\partial \Phi_2(t,\beta(t))}{\partial t}+\left(\frac{\partial \Phi_2(t,\beta(t))}{\partial x}-d\right)\frac{d\beta(t)}{dt}+dax +\frac{1}{2}w_2x^2+\frac{1}{2}w_2\rho_2^2-w_2x\rho_2.
\end{align*}
On substituting \eqref{FBNEalpha2diffquad} in the above equation, we obtain the roots of the above equation as follows:
\begin{align}
    &x_{21}(t),x_{22}(t)=\frac{-(da-w_2\rho_2)\pm\sqrt{\theta_\beta(t)} }{w_2},
\end{align}
where $x_{21}(t)<x_{22}(t)$ and $\theta_{\beta}(t)$ is given by \eqref{FBNEeq:delbeta}. 
Therefore, \eqref{FBNEQVI:HJB} holds if  $\ell_2(t)\geq x_{22}(t)$.

\subsection{Analytical solution of ODE}\label{FBNEsolve31}
To solve the differential equation $
\dot{p_1}(t)+b_x(p_1(t))^2+2ap_1(t)+w_1=0$ for $t\in (\tau_i,\tau_{i+1}), i\in \{0,1,\cdots, k\}$, we substitute $p_1(t)=\frac{\dot{\mu}(t)}{b_x\mu(t)}$ to obtain a second-order ordinary differential equation $\ddot{\mu}(t)+2a\dot{\mu}(t)+b_xw_1\mu(t)=0$. When $\theta=2\sqrt{a^2-w_1b_x}$, the solution of this equation is 
\begin{align*}
\mu(t)=e^{-at}(F_1e^{\frac{1}{2}\theta t}+F_2e^{-\frac{1}{2}\theta t}),    
\end{align*}
where $F_1$ and $F_2$ are constants.
\noindent
So, $p_1(t)$ is given by
\begin{align*}
   p_1(t)=\frac{\dot{\mu}(t)}{b_x\mu(t)}=  &\frac{-a\mu(t)+\frac{\theta}{2}e^{-at}(F_1e^{\frac{1}{2}\theta t}-F_2e^{-\frac{1}{2}\theta t})}{b_xe^{-at}(F_1e^{\frac{1}{2}\theta t}+F_2e^{\frac{1}{2}\theta t}) }\\
   &=\frac{1}{b_x}\left(-a+\frac{\theta}{2}-\frac{\theta}{C_1e^{\theta t}+1 }\right).
\end{align*}
Substitute $p_1(T)=s_1$ in the above equation to obtain
\begin{align}
    C_{1}=\left(\frac{2\theta}{\theta-2b_xs_1-2a}-1\right)e^{-\theta T}.
\end{align} 	
\section*{Acknowledgement}
The first author's research is supported by the FRQNT Doctoral research scholarship (B2X, 275596). The second author's research is supported by SERB, Government of India, grant MTR/2019/000771.
	%\bibliographystyle{apalike}
	%\bibliographystyle{abbrvnat}
	%\bibliography{biblio.bib}

\end{document}